\newtheorem{teorema}{Theorem}[section]
\newtheorem*{theorem*}{Main Theorem}
\newtheorem{lemma}[teorema]{Lemma}
\newtheorem{propos}[teorema]{Proposition}
\newtheorem{corol}[teorema]{Corollary}
\theoremstyle{definition}
\newtheorem{rem}{Remark}[section]
\newtheorem{defin}[teorema]{Definition}
\def\R{{\mathbb R}}  
\def\C{{\mathbb C}}  
\def\Z{{\mathbb Z}}
\def\Ol{\mathcal{O}}
\def\Re{{\sf Re}}
\def\Crt{{\rm Crt}}
\def\oli{\overline}				
\def\p{\partial}
\title{Some properties of Grauert type surfaces}
\author[S.~Mongodi]{Samuele Mongodi\textsuperscript{1}}
\address[\textsuperscript{1}]{Dipartimento di Matematica - Universit\`a di Pisa, Largo Bruno Pontecorvo 5, I--56127, Pisa,  Italy}
\email{mongodi@dm.unipi.it} 
\thanks{The first author was supported by the FIRB2012 grant ``Differential Geometry and Geometric Function Theory''}
\author[Z.~Slodkowski]{Zbigniew Slodkowski\textsuperscript{2}}
\address[\textsuperscript{2}]{Department of Mathematics, University of Illinois at Chicago, 851 South Morgan Street, Chicago, Illinois 60607, Usa}
\email{zbigniew@uic.edu}
\author[G.~Tomassini]{Giuseppe Tomassini\textsuperscript{3}}
\address[\textsuperscript{3}]{Scuola Normale Superiore, Piazza dei Cavalieri, 7 - I-56126 Pisa, Italy}
\email{g.tomassini@sns.it}
  \date{\today}
 \subjclass[2010]{Primary 32E, 32T, 32U; Secondary 32E05, 32T35, 32U10}
\keywords{Pseudoconvex domains, Weakly complete spaces, Holomorphic foliations}
\begin{document}

\maketitle
\tableofcontents

\section{Introduction}

Let $X$ be a complex surface, with a real analytic plurisubharmonic exhaustion function $\alpha$, such that the regular level sets of $\alpha$ are Levi-flat hypersurfaces, foliated by dense complex leaves; following the classification of the Main Theorem in \cite{crass}, we call $X$ a surface \emph{of Grauert type}. 

It follows from \cite[Theorems 5.1 and 6.1]{mst} that, up to removing a compact complex curve or passing to a holomorphic double cover, there exists a proper pluirharmonic function $\chi$ such that $\alpha=\lambda\circ \chi$.

In this short note we want to analyze some properties of Grauert type surfaces and give an example of the case \emph{iii-b} of the Main Theorem in \cite{crass}, i.e. an instance where we need to pass to a double holomorphic cover to be able to produce the desired pluriharmonic function.

In Section 2, we study the compact complex curves which can be found in a Grauert type surface and we show that they are all negative curves in the sense of Grauert, sitting in the singular levels of $\alpha$.

Section 3 is devoted to prove that the level sets of the proper pluriharmonic function $\chi$ are connected, showing in some sense that such a function is ``minimal'' and completing the analogy with the Cartan-Remmert reduction of a holomorphically convex space: for a surface of Grauert type, up to a double cover, we produced a proper pluriharmonic function with connected level sets such that all the other pluriharmonic functions are obtained by composing it with the (pluri)harmonic functions of the image (i.e. linear functions, the image being $\R$).

In Section 4 we construct a family of examples of complex surfaces of Grauert type which do not admit a proper pluriharmonic function, but whose holomorphic double covers do.

\section{Compact curves in Grauert type surfaces}

We know from \cite[Theorem 4.2]{mst} that no regular level of $\chi$ can contain a compact curve, because this would imply that $X$ is a union of compact complex curves, which is not the case, $X$ being of Grauert type. This section's results inspect what happens in the singular levels, but first we need the analogue of \cite[Lemma 7.2]{mst} for singular curves.

\begin{lemma}\label{ottobre}Let $W$ be a complex $2$-dimensional manifold and $C$ a compact complex curve in $W$. Then there is a finite covering $\{B_j\}_{j=1}^n$ of $C$ by open subsets of $X$ such that
\begin{enumerate}[i)]
\item $C\subset \bigcup_{j=1}^n B_j$
\item every $B_j$ is simply connected
\item $B_j\cap C$ is connected for  $j=1,\ldots, n$
\item whenever $B_j\cap B_k\neq \emptyset$, then also $B_j\cap B_k\cap C\neq\emptyset$ and $B_j\cap B_k$ is connected, for $1\leq j,k\leq n$.
\end{enumerate}
\end{lemma}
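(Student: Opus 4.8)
The plan is to build the covering by starting from an arbitrary finite cover of $C$ by coordinate balls and then refining it so that the combinatorial incidence conditions (iii) and (iv) hold. First I would use the compactness of $C$ and the fact that $W$ is a $2$-manifold to cover $C$ by finitely many biholomorphic images of the unit polydisc $\Delta^2\subset\C^2$, with the curve $C$ corresponding, in each chart, to an analytic subset of $\Delta^2$; shrinking the polydiscs slightly we may assume the closures are still contained in the charts. At this stage each piece is simply connected (it is a polydisc), which gives (ii), but neither $B_j\cap C$ nor the overlaps need be connected.

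The next step, and the technical heart of the argument, is to arrange connectedness of $B_j\cap C$. In a single chart $\Delta^2$ the curve is a $1$-dimensional analytic set; after a linear change of coordinates it is, near the center, a branched cover over one of the coordinate discs, so by choosing the polydisc small enough (a product of a small disc in the "horizontal" variable and a full disc in the "vertical" variable) we can make $B_j\cap C$ connected — indeed irreducible through the chosen center, or a union of branches all meeting at that point. Concretely I would center a chart at each point of $C$, pass to the local description of $C$ as $\{(z,w): P(z,w)=0\}$ with $P$ a Weierstrass polynomial in $w$, and take $B_j$ to be $\{|z|<\varepsilon_j\}\times\{|w|<\delta_j\}$ with $\varepsilon_j$ small enough that all the branches over $|z|<\varepsilon_j$ stay in $|w|<\delta_j$ and pass through the center; then $B_j\cap C$ is connected because it retracts (along the $z$-direction, following the branches) onto the finite set of points over $z=0$, which all coincide with the center. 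By compactness finitely many such $B_j$ suffice for (i).

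It remains to secure (iv): whenever $B_j\cap B_k\neq\emptyset$ we need $B_j\cap B_k\cap C\neq\emptyset$ and $B_j\cap B_k$ connected. For connectedness of $B_j\cap B_k$, note that each $B_j$ is (biholomorphic to) a polydisc, hence convex in its own coordinates, but convexity is not preserved under change of charts, so instead I would invoke a Lebesgue-number argument: choose the radii $\varepsilon_j,\delta_j$ so small that any two of the $B_j$ that meet are both contained in a single larger coordinate polydisc $\widetilde B$ in which both appear as "sub-polydiscs" with connected (even convex) intersection — this is arranged by first fixing a finite cover by larger charts $\widetilde B_m$ with Lebesgue number $\rho>0$ for the cover of the compact set $C$, then taking all the small $B_j$ of diameter $<\rho/2$. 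The condition $B_j\cap B_k\cap C\neq\emptyset$ is then obtained by centering each $B_j$ at a point of $C$ and shrinking further: if $p\in B_j\cap B_k$, then $p$ lies within distance $\operatorname{diam}(B_j)+\operatorname{diam}(B_k)$ of both centers $p_j,p_k\in C$, and since $C$ is (locally) $1$-dimensional analytic through $p_j$ it contains points arbitrarily close to $p_j$ inside $B_k$ as well once the radii are comparable; making all radii uniformly small relative to the geometry of $C$ forces the two local pieces of $C$ to overlap. The main obstacle I anticipate is precisely this last point — ensuring simultaneously that overlapping $B_j$'s have overlapping traces on the (possibly singular, reducible) curve $C$ while keeping all intersections connected — and the clean way to handle it is to fix the larger charts and Lebesgue number first, then choose all the small radii at the end, uniformly, exploiting that $C$ has no isolated points and that in each fixed chart the branch structure of $C$ is finite and fixed.
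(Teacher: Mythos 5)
There is a genuine gap, and it sits exactly where you yourself anticipated trouble: condition iv). Your plan is to centre all the small polydiscs at points of $C$ and make their radii ``uniformly small relative to the geometry of $C$'', claiming this forces $B_j\cap B_k\cap C\neq\emptyset$ whenever $B_j\cap B_k\neq\emptyset$. This fails when $C$ is singular -- and the paper needs the lemma precisely for singular, possibly reducible curves. Suppose $C$ has a node at $q$, with branches $C_1,C_2$ meeting only at $q$. For $p_j\in C_1$, $p_k\in C_2$ close to $q$, the distance between the branches is comparable to the distance to $q$ and tends to $0$. To keep $B_j\cap C$ connected (your condition iii)) a chart centred at $p_j$ must exclude $C_2$, so its radius must be smaller than $d(p_j,C_2)$, and symmetrically for $p_k$; but one can have $r_j\approx d(p_j,C_2)$, $r_k\approx d(p_k,C_1)$, both comparable to $d(p_j,p_k)$, so that $r_j+r_k>d(p_j,p_k)$. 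Then $B_j$ and $B_k$ overlap in the region between the two branches while $B_j\cap C\subset C_1$ and $B_k\cap C\subset C_2$ are disjoint, so iv) fails. No uniform smallness (Lebesgue number, diameter bound) removes this: the radii must degenerate as one approaches the singular point, and you give no argument that a finite cover with property iv) survives that tuning. There is a second, related gap: two intersecting small charts contained in a common big chart $\widetilde B$ do not ``appear as sub-polydiscs'' of $\widetilde B$ -- each $B_j$ is a polydisc in its own Weierstrass-adapted coordinates at $p_j$, not a product of discs in the coordinates of $\widetilde B$ -- so neither convexity nor connectedness of $B_j\cap B_k$ is established. (Geodesically convex balls for a fixed metric would repair the connectedness of overlaps, but not the nonemptiness problem above.)

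The paper sidesteps both difficulties by a combinatorial device rather than a metric one: since $(W,C)$ is a semianalytic pair, Łojasiewicz's triangulation theorem gives $h:(|K|,|L|)\to(W,C)$ with $L$ a finite subcomplex; one passes to the barycentric subdivision and takes as $B_j$ the open stars of the vertices of $L^1$. These are simply connected, meet $C$ in connected sets, and -- this is the point -- two stars intersect exactly when the corresponding vertices span an edge of $K^1$, which after barycentric subdivision automatically lies in $L^1$, so the intersection meets $C$; moreover the intersection is itself the star of that edge, hence connected. Any rescue of your analytic-covering approach would in effect have to reproduce this combinatorial control near the singular points, so you may as well invoke the triangulation from the start.
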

\begin{proof} Since $C$ is a semianalytic subset of $W$, by \cite[Theorem 2]{Lo}, we find a pair $(K,L)$, with $K$ a locally finite simplicial complex and $L\subset K$ a finite closed subcomplex,  which gives a triangulation of the pair $(X,C)$, through a homeomorphism $h:(|K|,|L|)\to(X,C)$. Consider  a barycentric subdivision $K^1$ of $K$ and the induced subdivision $L^1$ of $L$.

All the simplices are considered to be open. We write $x\prec y$ if $x$ is a face of $y$.

For every $a\in L^1$, we define the \emph{star of $a$ with respect to $K^1$} as
$$B_a=\bigcup_{a\prec s\in K^1}s\;.$$
It is clear that $B_a$ gives, through $h$, an open, simply connected neighbourhood $U_a$ of $h(|a|)$ in $X$, which intersects $C$ in a connected set. $L^1$ being a finite subcomplex, its vertices are in finite number, hence we have a finite collection of open sets $U_a$ with $a$ a vertex of $L^1$.

If $B_a\cap B_b\neq\emptyset$, we find a simplex $s_1\in B_a\cap B_b$ and it is not difficult to show that its edge $(a,b)$ is contained in $L^1$. In terms of open sets $U_a$, $U_b$, we obtained that if $U_a\cap U_b\neq \emptyset$ then $U_a\cap U_b\cap C\neq\emptyset$ as well.

Finally, since $B_a\cap B_b$ is a union of simplices of $K^1$ and every such simplex has $(a,b)$ as a face, $B_a\cap B_b=B_{(a,b)}$, i.e. the star of $(a,b)$ in $K^1$; therefore it is connected. \end{proof}

Now, we are in the position to formulate and prove the counterpart of Lemma 4.1 in \cite{mst} for critical levels.

\begin{lemma}\label{Atropo}Let $\chi\!:W\to \R$ be a pluriharmonic function on a complex surface $W$; assume that $d$ is a critical value of $\chi$ and that the level set $\{\chi=d\}$ contains a connected compact complex curve $C$. Then 
\begin{enumerate}
\item there exist a neighborhood $V$ of $C$ and nonconstant holomorphic function $G:V\to\C$ such that $G$ vanishes on $C$;
\item if, in addition, $W$ is a Grauert type surface, we can construct $G$ such that $\{G=0\}$ is connected but different from $C$.
\end{enumerate}
\end{lemma}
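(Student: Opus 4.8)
The plan for (1) is to obtain $G$ by gluing local ``holomorphic potentials'' of $\chi-d$. Fix the finite cover $\{B_j\}_{j=1}^{n}$ of $C$ provided by Lemma \ref{ottobre}. Since $\chi-d$ is pluriharmonic and each $B_j$ is simply connected (condition ii) of Lemma \ref{ottobre}), on $B_j$ we may write $\chi-d=\operatorname{Re}H_j$ with $H_j$ holomorphic, unique up to an additive imaginary constant. The restriction of $H_j$ to $B_j\cap C$ is holomorphic on the smooth part, has vanishing real part, and $B_j\cap C$ is connected, so (by continuity, the singular locus of $C$ being discrete) $H_j|_{B_j\cap C}$ is a single imaginary constant $it_j$; replacing $H_j$ by $H_j-it_j$ we arrange $H_j\equiv 0$ on $B_j\cap C$. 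The crucial point is that the transition constants then vanish: if $B_j\cap B_k\neq\emptyset$, then $H_j-H_k$ is holomorphic with vanishing real part on the connected set $B_j\cap B_k$, hence an imaginary constant; but $B_j\cap B_k$ meets $C$ by condition iv) of Lemma \ref{ottobre}, and there $H_j=H_k=0$, so that constant is $0$ and $H_j=H_k$ on $B_j\cap B_k$. Thus the $H_j$ glue to a holomorphic $G$ on $V:=\bigcup_{j}B_j$ with $\operatorname{Re}G=\chi-d$ and $G\equiv 0$ on $C$. Finally $G$ is nonconstant: were $G\equiv 0$, we would get $\chi\equiv d$ on $V$, which is excluded (automatically so when $W$ is of Grauert type, $\chi$ then being proper onto $\R$).

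For (2) we use the Grauert type hypothesis through the fact that the compact curve $C$ is a negative curve, hence contractible by Grauert's criterion: after shrinking $V$, there is a proper holomorphic map $\pi\colon V\to S$ onto a normal two--dimensional complex space $S$, biholomorphic off $C$, which collapses $C$ to a point $p$. The function $G$ from (1) vanishes on $C=\pi^{-1}(p)$ and is bounded near $C$, hence $G\circ\pi^{-1}$ is bounded holomorphic on $S\setminus\{p\}$ and, $S$ being normal, extends to a holomorphic $\widehat G$ on $S$ with $\widehat G(p)=0$, still nonconstant. Then $\{G=0\}=\pi^{-1}(\{\widehat G=0\})$, and since $\widehat G$ is nonconstant on the (irreducible) surface $S$ while $p\in\{\widehat G=0\}$, the germ of $\{\widehat G=0\}$ at $p$ is a genuine curve, which is connected; shrinking $S$, and $V$ accordingly, we may take $\{\widehat G=0\}$ connected, so that $\{G=0\}=\pi^{-1}(\{\widehat G=0\})$ is the union of $C$ with the nonempty strict transform $\overline{\pi^{-1}\bigl(\{\widehat G=0\}\setminus\{p\}\bigr)}$, a set meeting $C$. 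Hence $\{G=0\}$ is connected and strictly contains $C$; in particular $\{G=0\}\neq C$.

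The gluing in (1) and the removable--singularity extension of $\widehat G$ across $p$ are routine; the point where mere pluriharmonicity of $\chi$ is insufficient is the input that $C$ is a negative curve, which is what makes the contraction $\pi$ available and, via the nonconstancy of $\widehat G$, forces the presence of a component of $\{\widehat G=0\}$ besides the image point --- equivalently, of a branch of $\{G=0\}$ besides $C$. I therefore expect the main obstacle to be establishing (or being entitled to cite) the negativity of $C$, together with keeping the successive shrinkings of $V$ compatible. If negativity is not available at this stage, one can instead try to produce the extra component of $\{G=0\}$ directly from the structure of the critical level $\{\chi=d\}$ near $C$ and from the density of the leaves in the neighbouring regular levels, in the spirit of Lemma 4.1 of \cite{mst}.
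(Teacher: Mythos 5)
Your part (1) is essentially the paper's own argument: local holomorphic potentials (equivalently, pluriharmonic conjugates normalized to vanish on $C$) glued by means of properties iii) and iv) of Lemma \ref{ottobre}; this part is fine, including the normalization on $B_j\cap C$ and the vanishing of the transition constants.

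Part (2), however, has a genuine gap, and you have put your finger on it yourself: you make the whole argument rest on the statement that $C$ is a negative, hence contractible, curve, and you neither prove it nor are entitled to cite it at this point. In the paper negativity is a \emph{corollary} proved after this lemma, and its proof uses precisely the kind of argument your proposal avoids: one must rule out that a defining function of $C$ (or $F$ itself) has nearby fibres which are compact curves sitting in \emph{regular} levels of $\chi$, and this is excluded because in a Grauert type surface the regular levels are foliated by dense, hence noncompact, leaves. That is exactly how the paper proves (2) directly: assuming $C$ is a connected component of $\{F=0\}$, one shrinks to a connected $V_0$ on which $F$ is proper and $\{F=0\}\cap V_0=C$; then for a regular value $t_0\in\chi(V_0)$ and $p_0$ with $\chi(p_0)=t_0$, the fibre $\{F=F(p_0)\}\cap V_0$ is a compact complex curve inside the regular level $\chi^{-1}(t_0)$, contradicting density of the leaves. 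So the Grauert type hypothesis enters through this properness/compact-fibre argument, which is absent from your proposal; deferring it to ``negativity'' does not remove it, since the corollary establishing negativity needs the same input. A further caveat, should you pursue your route after supplying the corollary: for a possibly reducible connected curve $C$, Grauert's contraction criterion requires negative definiteness of the intersection matrix of the components, which is stronger than the single inequality $C\cdot C<0$ that the corollary yields, so the passage from ``negative'' to ``contractible'' also needs justification. The remaining steps of your contraction argument (extension of $G\circ\pi^{-1}$ across the normal point $p$, pure one-dimensionality of $\{\widehat G=0\}$, connectedness of its preimage) are sound, but they only become a proof once the missing negativity/contraction input is actually established.
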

\begin{proof} 
Choose a covering $\left\{V_j\right\}_{j=1}^n$ of $C$ as given by Lemma \ref{ottobre}. Consider $\chi_{_{\vert V_j}}$, $j$ fixed. Assuming $r>0$ small enough
so that $V_j$ is contained in a topological ball in $W$, we conclude
that $\chi$ has a pluriharmonic conjugate in this ball, and so in
$V_j$, say $\tau'_j\!:V_j\to\R$. Since $\chi_{_{\vert C}}=d$, a
constant, we conclude that its harmonic conjugate on $C\cap V_j$ is locally constant. Since $C\cap V_j$
is connected ${\tau'_j}_{\vert C\cap V_j}$ is constant. Subtracting the latter constant from
$\tau'_j$ we obtain a function $\tau_j\!:V_j\to\R$ such
that
\vspace{3mm}
\begin{itemize}
\item[$\bullet$] $\tau_j$ is a pluriharmonic conjugate of $\chi_{\vert
V_j}$;\\
\item[$\bullet$] $\tau_{\vert C\cap V_j}\equiv
0.$
\end{itemize}
Consider now two intersecting neighborhoods  $V_j$, $V_k$ and define
$V_0\!:=V_j\cap V_k\neq\emptyset.$ Since $V_0$ is connected, $\tau_j-\tau_k=a$
constant in $V_0$ and so $\tau_j-\tau_k\equiv 0$ in $V_0$,
because ${\tau_j}_{\vert V_0\cap C}\equiv 0\equiv
{\tau_k}_{\vert V_0\cap C}$. (Note that 
$V_0\cap\,C\neq\emptyset$ if $V_0\neq\emptyset$.) Thus $\tau_j(p)=\tau_k(p)$, whenever $p\in V_j\cap V_k$. Consequently, the family $\{\tau_j\}_{j=1}^n$ defines a single-valued pluriharmonic function $\tau\!:\!V\to\!\R$, where $V\!\!=\bigcup\limits_{j=1}^nV_j$, such that $F(p)\!=\!\chi(p)+i\tau(p)$, $p\in V$, is holomorphic.

Therefore, there exists $F\in\mathcal{O}(V)$, a non constant holomorphic function, such that $F_{\vert _C}=d$. We take $d=0$, proving (1).

\bigskip

To show (2), suppose that $W$ is a Grauert type surface and, by contradiction, that $C$ is a connected component of $\{F=0\}$. Take $V_0\subseteq V$ connected such that $F_0=F\vert_{V_0}:V_0\to\C$ is proper and $\{F_0=0\}=\{F=0\}\cap V_0=C$. Obviously, $\chi(V_0)$ is an open interval in $\R$ and it contains at least one regular value $t_0$; consider $p_0\in V_0$ such that $\chi(p_0)=t_0$ and define
$$C_0=\{p\in V_0\ :\ F_0(p)=F_0(p_0)\}\;.$$
By properness, this is a compact complex curve in $\chi^{-1}(t_0)$; but, $W$ being a Grauert type surface, regular levels of $\chi$ are foliated by dense (hence non compact) complex curves. This is a contradiction, so $C$ cannot be a connected component of $\{F=0\}$. \end{proof}
%
%
%
%
%

The previous result is enough for our purposes, however, we can prove a bit more about compact curves in Grauert type surfaces.

\medskip

We recall that a compact complex curve $C$ in a complex surface $X$ is said to be \emph{negative} if it has negative self-intersection or, equivalently, if $\Ol(-C)$ has a nonzero section.

It is a celebrated result of Grauert (see \cite{Gra}) that this is equivalent to being contractible, i.e. to the existence of  a proper holomorphic map $f:X\to X'$ onto a complex space $X'$, a biholomorphism outside $C$, such that $f(C)$ consists of just one point.

Therefore, negative curves arise as a result of modification of complex (possibly singular) surfaces. We state the next result for any $W$ not foliated in compact curves, which includes Grauert type surfaces and modification of Stein spaces, thus presenting also an alternative proof of part (2) of the previous Lemma.

\begin{corol}Let $W$, $\chi$, $d$, $C$ be as in Lemma \ref{Atropo}; if $W$ is not foliated in compact curves, then $C$ is negative. \end{corol}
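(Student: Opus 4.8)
The plan is to produce, from the holomorphic function $F:V\to\C$ of part (1) of Lemma \ref{Atropo}, a nonzero section of $\Ol(-C)$ on a neighborhood of $C$, which by the stated equivalence shows $C$ is negative. The natural candidate is $F$ itself, viewed as a section of $\Ol(-C)$: since $F$ vanishes on $C$, it defines a section of the ideal sheaf $\Ol(-D)$ where $D=\mathrm{div}(F)$ is the (effective) divisor of zeros of $F$, and $C$ is a union of components of $D$. So first I would examine $D$ near $C$: by part (2) of Lemma \ref{Atropo} (whose proof only used that $W$ is not foliated in compact curves, so it applies verbatim here), $C$ is \emph{not} a connected component of $\{F=0\}$, hence $D$ strictly contains $C$ — there is at least one other component of $D$ meeting $C$, or $C$ appears with multiplicity and there is a neighboring branch.

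The key step is then to argue that the existence of a divisor $D\supsetneq C$ with $D$ supported, near $C$, on a compact set forces $C$ to have negative self-intersection. Concretely: shrink $V$ to a connected $V_0$ on which $F_0=F|_{V_0}$ is proper onto a disc in $\C$, so that $\{F_0=0\}$ is compact; write $D_0=\mathrm{div}(F_0)=mC + C'$ with $m\ge1$ and $C'$ effective, $C'\neq 0$ (nonemptiness of $C'$, or $m\ge 2$, is exactly what part (2) gives). Since $F_0$ is proper, $D_0$ is the fiber over $0$ of a proper holomorphic map to a disc, hence homologous to a nearby fiber $F_0^{-1}(\epsilon)$, which is disjoint from $D_0$; therefore $D_0\cdot D_0 = 0$ inside $V_0$ (intersection with a homologous disjoint cycle). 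Expanding, $0 = D_0\cdot D_0 = m^2 (C\cdot C) + 2m\, (C\cdot C') + (C'\cdot C')$. Now $C\cdot C'\ge 0$ since $C$ and $C'$ have no common component, and it is $>0$ if they meet; similarly one controls $C'\cdot C'$. If $C'\neq\emptyset$ and meets $C$, then $2m(C\cdot C')>0$, and $C'\cdot C'$ can be bounded below using that $C'$ too lies in the compact fiber; rearranging gives $m^2(C\cdot C) = -2m(C\cdot C') - (C'\cdot C')<0$ provided $C'\cdot C'$ is not too negative — this is where care is needed. Alternatively, and more cleanly, if $m\ge2$ or $C'=0$ is impossible, one invokes directly: a compact curve that moves in a pencil (fibers of $F_0$) has non-negative self-intersection, with equality iff it is itself a full fiber; since $C$ is a proper sub-divisor of the fiber $D_0$, it cannot be a full fiber up to multiplicity unless $C$ alone, counted with multiplicity, equals $D_0$ — again excluded — so $C\cdot C<0$.

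The main obstacle I anticipate is the sign bookkeeping when $C$ appears with multiplicity $m\ge2$ and $C'$ is nonempty but small: one must rule out the degenerate possibility that the negative contribution of $C'\cdot C'$ exactly cancels things and leaves $C\cdot C=0$. The way around this is to note that $C\cdot C=0$ together with $C$ compact and connected would, by a standard argument (Grauert's criterion in reverse, or the fact that a curve with $C\cdot C=0$ in a surface not foliated by compact curves still moves in a positive-dimensional family only under extra hypotheses), contradict that $W$ is not foliated in compact curves — more precisely, $C\cdot C = 0$ and $C$ being a fiber component would let us deform $C$ and sweep out an open set by compact curves. Since $W$ is not foliated in compact curves, $C\cdot C\neq 0$; and since $C$ lies in the fiber $D_0$ of the proper map $F_0$, we cannot have $C\cdot C>0$ (fiber components of a map to a curve have $\le 0$ self-intersection). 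Hence $C\cdot C<0$, i.e. $C$ is negative, and by Grauert's theorem it is contractible — which, as remarked, reproves part (2) of Lemma \ref{Atropo}.
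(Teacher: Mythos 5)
There is a genuine gap, and it occurs at the very first reduction: you ``shrink $V$ to a connected $V_0$ on which $F_0=F|_{V_0}$ is proper onto a disc, so that $\{F_0=0\}$ is compact.'' This cannot be arranged under the hypotheses. By part (2) of Lemma \ref{Atropo} (which you yourself invoke), $C$ is \emph{not} a connected component of $\{F=0\}$: the component of the zero set containing $C$ has extra branches attached to $C$, and in every neighbourhood $V_0$ of $C$ these branches either reach $bV_0$ (so $\{F_0=0\}$ is not compact and $F_0$ is not proper) or are themselves compact curves one should have absorbed into $C$. Worse, the existence of such a proper $F_0$ would by itself contradict the corollary's hypothesis: a proper nonconstant $F_0:V_0\to\C$ has compact fibres, a generic nearby fibre $F_0^{-1}(\zeta)$ sits in a regular level $\{\chi=\Re\,\zeta\}$, and by Theorem 4.2 of \cite{mst} this forces $W$ to be a union of compact curves. (Indeed, in the proof of Lemma \ref{Atropo}(2) such a $V_0$ is produced only \emph{inside a reductio}, under the assumption that $C$ is a whole connected component of $\{F=0\}$.) Since there is no proper map and hence no pencil of compact fibres near $C$, the statements you rely on --- $D_0\cdot D_0=0$ by homology with a disjoint nearby fibre, ``fibre components have non-positive self-intersection,'' Zariski-type negativity for a proper subdivisor of a fibre --- have nothing to apply to, and the whole intersection-theoretic computation collapses.

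Your fallback for the degenerate case is also unjustified: the claim that $C\cdot C=0$ together with $W$ not being foliated in compact curves is impossible (``$C\cdot C=0$ would let us deform $C$ and sweep out an open set by compact curves'') is not a standard fact; a compact curve with topologically trivial, non-unipotent normal bundle has self-intersection $0$ and does not move, which is precisely the geometry behind the examples in Section 4. The paper avoids all of this by a line-bundle argument that never needs a proper map: it shows $\Ol(C)$ is nontrivial near $C$ (otherwise a \emph{global defining function} $G$ with $\{G=0\}=C$ exactly --- not the function $F$, whose zero set is strictly larger --- would produce compact curves $\{G=\zeta\}$ in regular levels of $\chi$, contradicting the hypothesis via Theorem 4.2 of \cite{mst}), and then uses the collection $\{F/f_j\}$ as a nonzero holomorphic section of $\Ol(-C)$, which must vanish somewhere on $C$ because the bundle is nontrivial, giving $C\cdot C<0$ directly. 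If you want to salvage your approach, you would have to replace the pencil argument by something of this kind, since the compact fibres you intersect with simply do not exist in the situation at hand.
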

\begin{proof} Let $\{U_j\}$ be an open convering of $C$ such that for each $j$ there exists a function $f_j\in\mathcal{O}(U_j)$ with $C\cap U_j=\{f_j=0\}$. The functions $f_i/f_j$, defined whenever $U_i\cap U_j\neq\emptyset$, represent the cocycle of the line bundle $\mathcal{O}(C)$; if such a bunlde were trivial, the function $f_j$ would glue into a global defining function for $C$, $G:U\to\C$, where $U=\bigcup_j U_j$. Hence, $C_\zeta=\{p\in U\ : \ G(p)=\zeta\}$ would be a compact curve, for $|\zeta|$ small enough; at least one of such curves would be contained in a regular level of $\chi$, so, by Theorem 4.2 in \cite{mst}, $W$ would be foliated in complex curves.

Therefore, $\Ol(C)$ is not trivial. Consider the holomorphic function $F:V\to\C$ given by Lemma \ref{Atropo}-(1) and the collection of functions $\{F/f_j\}_j$, each defined on $V\cap U_j$; such collection defines a nonzero holomorphic section $\sigma$ of the line bundle $\mathcal{O}(-C)\cong\mathcal{O}(C)^*$. As this line bundle is not trivial, $\sigma$ has to vanish at some point of $C$. This implies that $C\cdot C<0$, i.e. $C$ is negative in the sense of Grauert. \end{proof}

In particular, any compact curve in a Grauert type surface, apart from the absolute minimum set of the real analytic plurisubharmonic exhaustion function, were this of real dimension $\leq 2$, is negative.

\section{Level sets of pluriharmonic functions}\label{level}

\begin{teorema}\label{lisu}
Let $W$ be a $2$-dimensional complex manifold and $\chi:W\to (a,b)$, $-\infty\le a<b\le+\infty$, a proper pluriharmonic function. Assume $W$ is a Grauert type surface. Then every level of $\chi$ is connected.
\end{teorema}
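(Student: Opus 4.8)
The plan is to assume some level $\{\chi=c\}$ is disconnected and to reach a contradiction, after two reductions. First, properness of $\chi$ makes every level compact and the set of critical values discrete in $(a,b)$: the critical set $\{d\chi=0\}$ is complex--analytic (locally $\chi=\Re F$, and $d\chi=0$ iff $dF=0$), its positive--dimensional components are complex curves each contained in a single level (there $F$ has vanishing differential, hence is locally constant), and by properness only finitely many such curves and isolated critical points occur over any compact band $\{p\le\chi\le q\}$. Second, and crucially, a maximum--principle observation: being locally the real part of a holomorphic function, $\chi$ has no interior local extremum, so inside a compact band $\{p\le\chi\le q\}$ (with $p,q$ regular) every connected component $D$ of $\{p<\chi<c\}$ has $\overline D$ compact and $\partial D$ meeting both $\{\chi=p\}$ and $\{\chi=c\}$ --- otherwise $\chi$ would be constant on $\overline D$. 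In particular no connected component of $\{\chi<c\}$ or of $\{\chi>c\}$ is relatively compact, so, together with properness, every such component reaches an end of $(a,b)$ (its $\chi$--image has infimum $a$, resp.\ supremum $b$).

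Next I pass to a regular value: if some level is disconnected then, by Ehresmann's theorem applied to the proper submersion $\chi$ over an interval of regular values, we may take $c$ in the interior of a maximal open interval $I=(p,q)$ of regular values on which $\{\chi=t\}$ has a constant number $r\ge2$ of components, so $\chi^{-1}(I)\cong\Sigma\times I$ with $\Sigma=M_1\sqcup\dots\sqcup M_r$ and each $M_j$ a compact Levi--flat hypersurface foliated, $W$ being of Grauert type, by complex leaves dense in $M_j$. These Levi foliations assemble into a holomorphic foliation of $\chi^{-1}(I)$, locally given by $dF$ with $F=\chi+i\tau$. Now consider the period homomorphism $\rho_j\colon H_1(M_j)\to\R$, $\gamma\mapsto\int_\gamma d^{c}\chi$. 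If $\mathrm{Im}\,\rho_j$ is trivial or a discrete subgroup $\ell\Z$, then on the slab $Z_j=M_j\times I$ the function $F$, resp.\ $\exp(2\pi F/\ell)$, is single--valued and holomorphic, and its fibres, being closed analytic subsets of the compact levels $M_j\times\{\sigma\}$ of $\chi$, are compact complex curves contained in regular levels of $\chi$; this contradicts \cite[Theorem~4.2]{mst}, since $W$ is not a union of compact curves. Hence every $\rho_j$ has dense image --- i.e.\ each $M_j$ has dense Levi holonomy, the genuinely ``Grauert type'' situation, in which no holomorphic first integral exists on $\chi^{-1}(I)$.

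The main obstacle is to preclude $r\ge2$ in this dense--holonomy case. The idea is to use that $W$ is connected, so the slabs $Z_1,\dots,Z_r$ must be joined either through the critical levels $\chi^{-1}(p)$, $\chi^{-1}(q)$ bounding $I$, or --- when $p=a$ or $q=b$ --- only through the ends of $(a,b)$; the latter is ruled out by the maximum--principle step, since a component of $\{\chi<c\}$ (resp.\ $\{\chi>c\}$) containing a slab cannot be relatively compact and must run to the other end, so in that case some component of $\{\chi<c\}$ (or of $\{\chi>c\}$) would have to contain several of the $Z_j$'s. Tracking the holonomy of the holomorphic foliation as leaves cross the critical levels, and invoking Lemma~\ref{Atropo} together with its Corollary at those critical levels --- where any compact curve present is negative in the sense of Grauert, hence a blow--down --- one is led to an identification of the dense leaves of distinct $M_j$'s which, after passing to the suitable --- possibly double, as in case \emph{iii-b} of \cite{crass} --- holomorphic cover on which $d^{c}\chi$ becomes exact, yields a proper holomorphic function lifting $\chi$ whose fibres are again compact complex curves in regular levels: contradiction. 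Making this last monodromy argument precise, so that it genuinely rules out $r\ge2$ rather than merely bounding the number of components from below, is the delicate point, and it is here that the full force of the classification in \cite{crass} and of the structure theorems of \cite{mst} is used.
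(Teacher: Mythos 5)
Your preliminary reductions are sound (compact levels, discreteness of critical values, no interior extrema for a nonconstant pluriharmonic function, Ehresmann over an interval of regular values, and the period argument showing that a discrete period group of $d^c\chi$ on a slab would produce a holomorphic first integral with compact fibres in regular levels, contradicting \cite[Theorem 4.2]{mst}). But the proof stops exactly where the theorem lives: you never actually rule out $r\ge 2$ components over a maximal regular interval, and you say yourself that making the ``monodromy'' step precise is the delicate point. Moreover, the mechanism you sketch for that step cannot work as stated: on a Grauert type surface no holomorphic function lifting $\chi$ exists, even after passing to a double cover --- what \cite[Theorems 5.1 and 6.1]{mst} produce is only a \emph{pluriharmonic} function, and your own dense-holonomy observation is precisely the obstruction to a holomorphic first integral --- so ``a proper holomorphic function lifting $\chi$ whose fibres are compact complex curves'' is not obtainable by any covering trick, and the intended contradiction is out of reach along that route. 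Invoking Lemma \ref{Atropo} and the negativity of critical curves does not by itself say how the slabs $Z_1,\dots,Z_r$ are glued across a critical level, which is the whole issue. So this is a genuine gap, not a matter of polishing.

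For comparison, the paper resolves exactly this point by a local statement at the critical levels rather than by global monodromy: if $C$ is a connected component of $\mathrm{Crt}(\chi)$ (a compact curve or a point in a single level), Lemma \ref{Atropo}(2) gives a holomorphic $F$ near $C$ with $\{F=0\}$ connected and strictly larger than $C$, and then the continuity theory of analytic sets (limits superior of sets with the local maximum property, plus the Tworzewski--Winiarski continuity of intersections \cite{two}) yields Lemma \ref{lisup4}: $C$ cannot disconnect any relative neighbourhood of itself inside its level. Once that local non-separation is known, a purely topological argument (the covering-space Lemma \ref{sconnessione} together with Theorem \ref{affettato}, using Ehresmann on the regular part) converts it into connectedness of \emph{every} level, with no need to track holonomy through the singular levels or to appeal to the full classification of \cite{crass}. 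If you want to complete your approach, the missing ingredient is a substitute for Lemma \ref{lisup4} and Theorem \ref{affettato}: a proof that the components of nearby regular levels merge when crossing a critical level, and this is precisely what your sketch does not provide.
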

We need first some auxiliary facts about convergent sequences of analytic sets. For a discussion of the various definitions of convergence, see \cite[Section 15.5]{chi} or \cite{two}. It is convenient for us to reformulate these definitions in the following manner.

Let $Y$ be a locally compact Hausdorff space. Let $\{F_n\}$ be a sequence of non-empty closed subsets of $Y$. We say that 
$\limsup_{n\to+\infty} F_n\subset F$, where $F$ is a closed subset of $Y$, if for every compact subset $K\subset Y\setminus F$ there is an index $n_0$ such that for $n>n_0$, $F_n\cap K=\emptyset$. We say that $\limsup_{n\to+\infty} F_n=F_0$, if 
$$
F_0=\bigcap\limits_{\stackrel{\limsup_{n\to+\infty} F_n\subset F}{F\,{\rm closed}}}  F.
$$
Then, using \cite[Lemma 1(1)]{two} we can define that the sequence $\{F_n\}$ is convergent to $F_0$ ($\lim_{n\to+\infty}F_n=F_0$) if 
\begin{itemize}
\item[i)] $\limsup_{n\to+\infty} F_n=F_0$;
\item[ii)] for every $y_0\in F_0$ there is a sequence $\{y_n\}$ such that $y_n\in F_n$, $n=1,2,\ldots$ and $\lim_{n\to+\infty}y_n=y_0.$
\end{itemize} 
Observe that $\limsup_{n\to+\infty} F_n$ always exists $\lim_{n\to+\infty}F_n$ not always.

Although we believe the following proposition to be well known, we will give a sketch of proof.
\begin{propos}\label{lisup}
Let $W$ be a complex manifold and $\{F_n\}$ a sequence of closed sets with local maximum property. Then $F:=\limsup_{n\to+\infty} F_n$ has the local maximum property as well.
\end{propos}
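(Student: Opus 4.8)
The plan is to show that the local maximum property is preserved under the $\limsup$ operation by a direct argument using the defining property: recall a closed set $F$ has the \emph{local maximum property} if for every point $p \in F$ and every sufficiently small neighborhood $U$ of $p$, and every function $u$ plurisubharmonic near $\overline{U}$, one has $\max_{\overline{U} \cap F} u = \max_{\partial U \cap F} u$ (the maximum on a relatively compact piece is attained on the ``boundary'' in the appropriate sense; equivalently, $F$ does not have strictly interior local maxima for psh functions). First I would fix a psh function $u$ defined in a neighborhood of a compact set $\overline{U}$ with $U$ a small coordinate ball, and suppose for contradiction that $u$ attains a strict local maximum over $F = \limsup_n F_n$ at an interior point $p \in F \cap U$, i.e.\ $u(p) > \max_{\partial U \cap F} u =: m$; after shrinking $U$ we may assume $u(p) > u(q)$ for all $q \in \overline{U} \cap F$ near $\partial U$.

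The key step is to transfer this to the sets $F_n$. By the definition of $\limsup$, for every compact $K \subset W \setminus F$ we have $F_n \cap K = \emptyset$ for $n$ large. Apply this with $K$ a suitable compact neighborhood of the set $\{q \in \partial U : \text{dist}(u(q), \text{top value}) \text{ large}\}$ — more precisely, choose a slightly larger ball $U'$ and a compact collar $\overline{U'} \setminus U$, intersect with the sublevel set where $u \le m + \varepsilon$; combined with the fact (also from the definition of $\limsup$) that points of $F$ are limits of points of $F_n$, pick $p_n \in F_n$ with $p_n \to p$. Then $u(p_n) \to u(p) > m$, while on $F_n \cap \partial U$ the values of $u$ stay below $m + \varepsilon$ for large $n$ (since $F_n$ near $\partial U$ is eventually confined to the region where $u$ is small, because that part of $\partial U$ stays away from $F$ where $u$ is large). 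Hence for large $n$ the restriction of $u$ to $F_n \cap \overline{U}$ has a maximum exceeding its maximum on $F_n \cap \partial U$, attained at an interior point — contradicting the local maximum property of $F_n$.

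The main obstacle is bookkeeping the geometry near $\partial U$: one has to be careful that $F_n$ does not ``escape'' $\overline{U}$ in a way that makes $F_n \cap \overline{U}$ fail to be compact, and that the maximum of $u$ over $F_n \cap \overline{U}$ is actually realized on $\overline{U}$ rather than leaking to $\partial U$ where the estimate is delicate. I would handle this by working with two concentric balls $U \subset\subset U' \subset\subset$ (domain of $u$), noting $F \cap \overline{U'}$ is compact, choosing the radii so that $u(p) > \sup_{(\overline{U'} \setminus U) \cap F} u$ (possible since $p$ is a strict local max — if $p$ is not a strict local max one first perturbs $u$ by adding $-\epsilon|z - p|^2$ in local coordinates, which keeps it psh), and then using that $\limsup_n F_n \subset F$ forces $F_n \cap (\overline{U'} \setminus U)$ to lie, for $n$ large, in the open set $\{u < u(p)\}$; meanwhile $p_n \in F_n \cap U$ with $u(p_n)$ close to $u(p)$. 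This pins the interior maximum for $F_n$ and closes the argument. Since the statement is asserted to be well known, a sketch at this level of detail suffices.
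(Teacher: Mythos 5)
Your overall route is the same as the paper's: argue by contradiction, transport (nearly) maximal points of $u$ on $F$ to points $p_{n_k}\in F_{n_k}$ converging to $p$, and use the definition of $\limsup$ to confine $F_n\cap\partial U$, for large $n$, to the region where $u$ stays below $u(p)$, thereby contradicting the local maximum property of some $F_n$ on the ball. The paper's only real extra ingredient is that it first invokes \cite[Proposition 2.3]{Sl2} to replace the arbitrary plurisubharmonic witness by a \emph{strongly} plurisubharmonic $u$ with a strict quadratic peak ($u(y)=0$, $u(z)<-\epsilon|z-y|^2$ on $\overline B\cap F\setminus\{y\}$), after which the comparison of boundary and interior values is immediate.

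That citation is precisely what patches the two weak points of your sketch. First, your fallback ``perturb $u$ by adding $-\epsilon|z-p|^2$\,, which keeps it psh'' is false for a general plurisubharmonic $u$ (take $u\equiv 0$); such a perturbation is legitimate only when $u$ is strongly plurisubharmonic with complex Hessian bounded below by $\epsilon$, and manufacturing a strictly peaked strongly psh witness out of a bare failure of the local maximum property is exactly the content of \cite[Proposition 2.3]{Sl2}. Fortunately neither strictness nor the collar $\overline{U'}\setminus U$ is needed: the negation of the local maximum property already gives an interior point $p\in F\cap U$ with $u(p)>m:=\max_{F\cap\partial U}u$, and for $0<\varepsilon<u(p)-m$ the set $K:=\partial U\cap\{u\ge m+\varepsilon\}$ is compact (as $u$ is upper semicontinuous) and disjoint from $F$, so $F_n\cap\partial U\subset\{u<m+\varepsilon\}$ for large $n$; this is all the boundary control you need. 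Second, a psh function is only upper semicontinuous, so the lower bound $u(p_{n})\to u(p)$ at the approximating points is not automatic; either regularize $u$ on the coordinate ball first (the smoothings decrease to $u$, hence still witness the failure for small parameter and are continuous), or again use the continuous strongly psh witness from \cite{Sl2}. A last cosmetic point: the definition of $\limsup$ only yields points $y_k\in F_{n_k}$ converging to $p$ along a subsequence, not a choice $p_n\in F_n$ for every $n$; this is harmless, since a single violating index already gives the contradiction.
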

\begin{proof} (Sketch) Suppose, by contradiction, that $F$ does not have the local maximum property, then, by \cite[Proposition 2.3]{Sl2}, there are $y\in F$, $B=B(y,r)$ with $r>0$, an $\epsilon>0$ and a strongly plurisubharmonic function on $\overline{B}$ such that $u(y)=0$ and
\begin{equation}\label{Mercuzio}u(z)<-\epsilon|z-y|^2\;,\qquad z\in\overline{B}\cap F\setminus\{y\}\;.\end{equation}
By  the very definition of limit superior of analytic sets we have
$$\limsup_{n\to\infty}(F_n\cap bB)\subseteq F\cap bB\;;$$
moreover, since $u$ is continuous, we have that
\begin{equation}\label{Tebaldo}\limsup_{n\to\infty}\max u\vert_{F_n\cap bB}\leq \max u\vert_{F\cap bB}\;.\end{equation}
Now, it follows again from the definition of limit superior of analytic sets that there is a subsequence $F_{n_k}$ and a sequence of points $(y_k)$ such that $y_k\in F_{n_k}$ and $\lim y_k=y$, therefore $\lim u(y_k)=u(y)=0$. Hence, there exists $k_0$ such that
$$u(y_{k_0})>\max u\vert_{F_{n_{k_0}}\cap bB}$$
by \eqref{Mercuzio} and \eqref{Tebaldo}; so
$$\max u\vert_{F_{n_{k_0}}\cap \overline{B}}>\max u\vert_{F_{n_{k_0}}\cap bB}\;,$$
which contradicts the local maximum property of $F_{n_{k_0}}$. \end{proof}

\begin{corol}\label{lisup1}
Let $W$ be a complex manifold and $\{F_n\}$ a sequence of closed complex analytic subsets of $W$ of pure dimension $1$. Assume that $F_0:=\limsup_{n\to+\infty} F_n$ is contained in a complex analytic subset $F$ of $W$, $F$ also of pure dimension $1$. Then $F_0$ is the union of some of the irreducible components of $F$. 
\end{corol}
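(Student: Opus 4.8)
The plan is to recognise $F_0$ as a closed set with the local maximum property that is contained in the pure one–dimensional analytic set $F$, and then to prove that any such set is a union of irreducible components of $F$.

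I would begin by recalling the standard fact that a complex analytic set $A$ of pure dimension $1$ has the local maximum property: for $u$ plurisubharmonic near a closed ball $\oli B$, the pull–back of $u|_A$ along the normalisation $\nu\colon\tilde A\to A$ is subharmonic on the Riemann surface $\tilde A$, and since $\nu$ is finite and proper one has $\partial(\nu^{-1}(A\cap B))\subseteq\nu^{-1}(A\cap bB)$, so the classical maximum principle on $\tilde A$ gives $\max_{A\cap\oli B}u=\max_{A\cap bB}u$. Thus each $F_n$ has the local maximum property, hence by Proposition \ref{lisup} so does $F_0=\limsup_n F_n$; and $F_0$ is closed by the definition of $\limsup$. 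Also $F_0$ has no isolated points, since at an isolated point the equality defining the local maximum property fails for any nonconstant plurisubharmonic function (the bounding sphere misses $F_0$); cf. \cite[Proposition 2.3]{Sl2}.

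The core is then the purely local assertion: \emph{a closed set $E$ with the local maximum property contained in an analytic set $F$ of pure dimension $1$ equals the union of some of the irreducible components of $F$.} Write $F=\bigcup_{i\in I}F_i$ for the locally finite decomposition into irreducible components; each connected component of $F\setminus{\rm Sing}\,F$ is $F_i\setminus{\rm Sing}\,F$ for a unique $i$ and is dense in $F_i$. I would show that $E\cap(F\setminus{\rm Sing}\,F)$ is open in $F\setminus{\rm Sing}\,F$: if not, there is a regular point $p\in E$ which is a boundary point of $E\cap(F\setminus{\rm Sing}\,F)$; in holomorphic coordinates $(z_1,z_2)$ near $p$ with $F=\{z_2=0\}$, the trace $E'=\{z_1:(z_1,0)\in E\}$ is a nonempty proper closed subset of a disc $\D$ that is not a neighbourhood of (and, by the previous paragraph, not discrete near) the point corresponding to $p$. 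One can then find $q\in E'\cap\D$ at which $E'$ lies locally in a closed half–plane $\{\Re(\bar a(z_1-q))\le0\}$ whose boundary line passes through $q$ — for instance an extreme point of the convex hull of a small compact piece of $E'$. Putting $w=\bar a(z_1-q)=w_1+iw_2$ (so $w_1\le0$ on $E'$ near $q$, while $z_2=0$ on $E$), the function
\[
u(z_1,z_2)=\Re\big(\bar a(z_1-q)\big)+c\,\Re\big(\bar a^2(z_1-q)^2\big)+c'\big(|z_1-q|^2+|z_2|^2\big),\qquad 0<c'\ll c\ll1,
\]
is strongly plurisubharmonic near $(q,0)$, vanishes there, and satisfies $u=w_1(1+cw_1)-cw_2^2+c'|w|^2\le-\epsilon|(z_1,z_2)-(q,0)|^2$ on $E$ near $(q,0)$ for some $\epsilon>0$; by \cite[Proposition 2.3]{Sl2} this contradicts the local maximum property of $E$. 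Hence $E\cap(F\setminus{\rm Sing}\,F)$ is clopen, so it is a union of the dense pieces $F_i\setminus{\rm Sing}\,F$; taking closures (and using that $E$ is closed) gives $\bigcup_{i\in J}F_i\subseteq E$ for the corresponding $J$, while any further point of $E$ would lie in ${\rm Sing}\,F\setminus\bigcup_{i\in J}F_i$ and hence be isolated in $E$, which is excluded. Therefore $E=\bigcup_{i\in J}F_i$, and the Corollary follows by taking $E=F_0$.

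The hard part is the openness of $E\cap(F\setminus{\rm Sing}\,F)$, i.e. the one–dimensional statement that a nonempty proper closed subset of a disc cannot have the local maximum property. The subtle point is that a bare supporting half–plane is insufficient when $E'$ contains a segment of the supporting line through $q$; this is exactly why the harmonic quadratic correction $c\,\Re(\bar a^2(z_1-q)^2)$ is inserted into the test function $u$, so as to make $u$ strictly less than $-\epsilon|z_1-q|^2$ along all of $E'$ near $q$. One should also double–check the mild compactness technicality in the choice of $q$ (shrinking the coordinate disc, or re‑centring if the extreme point lands on its boundary circle).
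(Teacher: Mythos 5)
Your overall route is the same as the paper's: each $F_n$, being analytic of pure dimension $1$, has the local maximum property; by Proposition \ref{lisup} so does $F_0$; and one concludes by showing that $F_0$ cannot have a boundary point inside a connected component of $F_{\rm reg}$, then taking closures and ruling out isolated points in ${\rm Sing}\,F$. The paper only sketches that boundary-point step; your attempt to prove it is where there is a genuine gap. The claim that one can find $q\in E'$ at which $E'$ lies locally in a closed half-plane with $q$ on the bounding line is false for general closed subsets of a disc, and the ``mild compactness technicality'' you flag is actually the crux. Take $E'=\{z_1:\ |z_1|<1,\ |z_1-x_0|\ge\varepsilon\}$, the disc minus a small open hole, with $p'$ on the hole's boundary circle: this is a nonempty, proper, closed subset of the disc which is not a neighbourhood of $p'$, yet \emph{no} point of $E'$ admits a local supporting half-plane (interior points clearly do not, and at a point of the circle the set $\{|z_1-x_0|\ge\varepsilon\}$ contains points strictly on both sides of every line through that point, arbitrarily close to it). Accordingly, in your recipe all extreme points of the convex hull of a small compact piece land on the artificial boundary circle of the piece, and re-centring only reproduces the same situation; no choice of centre helps, because the obstruction is intrinsic. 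Since ``locally concave'' traces of exactly this kind are among the configurations the lemma must exclude, the half-plane construction cannot close the argument.

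The repair is standard: support $E'$ by a disc instead of a line. Choose $a\notin E'$ sufficiently close to the boundary point $p'$, set $r={\rm dist}(a,E')$ and pick $q\in E'$ with $|q-a|=r$, so that $E'$ misses $\{|z_1-a|<r\}$. Put $h(z_1)=r/(z_1-a)$; then $|h|\le1$ on $E'$, $|h(q)|=1$, and for $|w|\le 1$, $|w_0|=1$ one has $\Re(\overline{w_0}\,w)-1\le-\tfrac12|w-w_0|^2$. Hence
$$u(z_1,z_2)=\Re\bigl(\overline{h(q)}\,h(z_1)\bigr)-1+c'\bigl(|z_1-q|^2+|z_2|^2\bigr),\qquad 0<c'\ll1,$$
is strongly plurisubharmonic near $(q,0)$, vanishes there, and on $E$ (where $z_2=0$) satisfies $u\le-\epsilon|(z_1,z_2)-(q,0)|^2$ because $h$ is a local biholomorphism at $q$; by \cite[Proposition 2.3]{Sl2} this contradicts the local maximum property of $E$, exactly as in your scheme. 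With this substitution the remainder of your proof (clopenness in $F_{\rm reg}$, passage to closures, exclusion of extra points of ${\rm Sing}\,F$ as isolated points) is correct and agrees with the paper's argument.
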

\begin{proof}
(Sketch) Let $V$ be a connected component of $F_{\rm reg}$. $V$ is a Riemann surface. If $F_0\cap V\neq\emptyset$, and $F_0\cap V\neq V$, i.e. has a boundary point $x_0$ in $V$, then $F_0$ fails to have local maximum property. Thus $F_0\cap F_{\rm reg}$ is the union of a family of connected components $V_\alpha$ of $F_{\rm reg}$, i.e. $F_0=\cup_\alpha\oli V_\alpha$, where each $\oli V_\alpha$ is an irreducible variety. \end{proof}

 The following fact is a special case of a much more general result in \cite[Theorem 1]{two}. 
 
\begin{corol}\label{lisup2}
Let $W$ be a complex manifold of dimension $2$. Let $Z_1$ and $Z_2$ be (closed) complex analytic subsets of $W$, of pure dimension  $1$. Assume that $Z_1\cap Z_2\neq\emptyset$ is discrete. Now let $\{F_n\}$ be a sequence of complex analytic sets of pure dimension $1$ converging to $Z_2$. Then $Z_1\cap F_n\neq\emptyset$ for $n$ large enough.
\end{corol}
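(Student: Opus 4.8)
My plan is to argue by contradiction, localising the problem at a single intersection point and applying the maximum principle for plurisubharmonic functions on the analytic sets $F_n$. Assume the conclusion fails; then, after passing to a subsequence — which still converges to $Z_2$ — I may suppose that $Z_1\cap F_n=\emptyset$ for every $n$. Fix a point $p\in Z_1\cap Z_2$. Since $Z_1\cap Z_2$ is discrete and $\dim W=2$ (so the pure $1$-dimensional set $Z_1$ is, near $p$, the zero locus of a single holomorphic function), I can pick a coordinate ball $U\ni p$ with $\overline U$ compact in $W$, and $f$ holomorphic on a neighbourhood of $\overline U$, such that $Z_1\cap\overline U=\{f=0\}$ and $Z_1\cap Z_2\cap\overline U=\{p\}$. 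Then I fix a smaller ball $B'=B(p,r')$ with $p\in B'$ and $\overline{B'}\subset U$.

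Next I use the standing assumption $Z_1\cap F_n=\emptyset$: on the open set $U\setminus Z_1$, which contains $F_n\cap\overline{B'}$, the function $\log|1/f|=-\log|f|$ is plurisubharmonic (indeed pluriharmonic). As $F_n$ is a complex analytic set it has the local maximum property, so applying it to $\log|1/f|$ on the ball $B'$ gives
\[
\min_{F_n\cap\overline{B'}}|f|=\min_{F_n\cap\partial B'}|f|
\]
provided $F_n\cap\overline{B'}\neq\emptyset$. Here I will also observe that if $F_n$ meets the open ball $B'$, then it meets $\partial B'$ as well: otherwise $F_n\cap\overline{B'}=F_n\cap B'$ would be a nonempty compact analytic subset of the open ball $B'$, which cannot happen for a set of pure dimension $1$.

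Then I invoke the convergence $F_n\to Z_2$. By condition (ii) in the definition of convergence, applied to $p\in Z_2$, there are points $y_n\in F_n$ with $y_n\to p$, so $y_n\in F_n\cap B'$ for $n$ large and $f(y_n)\to f(p)=0$. Together with the displayed identity this yields $z_n\in F_n\cap\partial B'$ with $f(z_n)\to 0$. Passing to a further subsequence, $z_n\to z_*\in\partial B'$, hence $f(z_*)=0$, i.e.\ $z_*\in Z_1$; and since $z_n\in F_n$ with $z_n\to z_*$, necessarily $z_*\in\limsup_n F_n=Z_2$ (a compact neighbourhood of $z_*$ disjoint from $Z_2$ would be eventually missed by all $F_n$). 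Thus $z_*\in Z_1\cap Z_2\cap\partial B'$, contradicting $Z_1\cap Z_2\cap\overline{B'}\subset\{p\}$ and $p\notin\partial B'$. This contradiction proves the corollary.

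I expect the middle step to be the main obstacle: correctly applying the local maximum property of the (possibly singular) analytic sets $F_n$ to $\log|1/f|$, and keeping the local picture coherent — above all shrinking $U$ so that $Z_1\cap Z_2\cap\overline{B'}=\{p\}$, and checking that $F_n$ meets $\partial B'$ whenever it meets $B'$. The remaining steps are soft; alternatively, one may simply quote \cite[Theorem 1]{two}, of which this corollary is a special case.
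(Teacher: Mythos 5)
Your proof is correct in substance, but it takes a genuinely different route from the paper's: the paper does not prove this corollary at all, it simply invokes \cite[Theorem 1]{two} on continuity of intersections of analytic sets, adding only the remark that the argument there is local and therefore works on manifolds and with $Z_1\cap Z_2$ discrete rather than finite. You instead give a self-contained Hurwitz-type argument: localize at one point $p\in Z_1\cap Z_2$, write $Z_1=\{f=0\}$ near $p$ (legitimate: in a $2$-dimensional manifold a pure $1$-dimensional analytic set is a hypersurface, hence locally principal), and play the minimum of $|f|$ on $F_n\cap\overline{B'}$ against the boundary sphere, using clause (ii) of convergence to produce interior points where $|f|\to0$ and clause (i) to force the limiting boundary point into $Z_2$, contradicting $Z_1\cap Z_2\cap\overline{B'}=\{p\}$. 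What your route buys is independence from the reference and a transparent view of where dimension $2$ and discreteness enter; what the citation buys is brevity and a statement valid in much greater generality. The one step to tighten is the identity $\min_{F_n\cap\overline{B'}}|f|=\min_{F_n\cap\partial B'}|f|$: the local maximum property is a statement about small balls centred at points of $F_n$, and $-\log|f|$ is plurisubharmonic only off $Z_1$, so the equality over the fixed ball $B'$ is not a one-line application. It can be obtained either by the perturbation trick used in the proof of Proposition \ref{lisup} (at an interior maximum point of $-\log|f|+\epsilon|z|^2$ on $F_n\cap\overline{B'}$ one manufactures a strictly plurisubharmonic function giving the configuration of \cite[Proposition 2.3]{Sl2}, contradicting the local maximum property), or, more simply since the $F_n$ are analytic, by noting that if $|1/f|$ attained its maximum over $F_n\cap\overline{B'}$ at an interior point then $f$ would be constant on the irreducible component of $F_n\cap B'$ through that point, and no such component is relatively compact in the Stein ball $B'$ (your own observation, applied componentwise), so the extremal value is also attained on $\partial B'$. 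With that step firmed up, your argument is complete.
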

\begin{rem}
The corresponding result in \cite{two} is stated only in $\C^n$ requiring $Z_1\cap Z_2$ to be finite, but since it is essentially local, it is true for complex manifolds and with $Z_1\cap Z_2$ discrete without need to change the proof.
\end{rem}
\begin{teorema}\label{lisup3}
Let $W$ be a complex manifold of dimension $2$ and $\{F_n\}$ a convergent sequence of irreducible complex analytic subsets of $W$ of pure dimension $1$. Let $Z$ be a connected complex analytic subset of $W$ with $\dim_\C Z=1$. Assume that 
\begin{itemize}
\item[i)] $F_n\cap Z=\emptyset$, $n=1, 2, \ldots$ 
\item[ii)] $\lim_{n\to+\infty} F_n\subseteq Z$.
\end{itemize}
Then $Z=\lim_{n\to+\infty}F_n$.
\end{teorema}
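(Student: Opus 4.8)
The plan is to identify $F_0:=\lim_{n\to+\infty}F_n$ as a union of irreducible components of $Z$, and then to use the connectedness of $Z$ to force $F_0=Z$.

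First I would record that $Z$, being connected with $\dim_\C Z=1$, has pure dimension $1$ (a $0$-dimensional irreducible component would be an isolated point of $Z$, contradicting connectedness). Since the $F_n$ are pure-dimensional analytic sets they have the local maximum property, hence so does $F_0=\limsup_n F_n$ by Proposition \ref{lisup}; as $F_0\subseteq Z$ by hypothesis (ii), Corollary \ref{lisup1} applied with ambient variety $F:=Z$ yields that $F_0$ is the union of some of the irreducible components of $Z$. Writing $\{Z_\alpha\}_{\alpha\in I}$ for the (locally finite) family of these components and $F_0=\bigcup_{\alpha\in A}Z_\alpha$ with $A\subseteq I$, I would note that we may take $F_0\ne\emptyset$, i.e. $A\ne\emptyset$; this is implicit in the convergence of $\{F_n\}$ in the situations of interest, and the degenerate case $F_0=\emptyset$ (not covered by Corollary \ref{lisup2}) must in any event be excluded.

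Next, arguing by contradiction, I would assume $F_0\ne Z$, so $A\subsetneq I$. Using that $Z$ is connected if and only if its intersection graph — vertices $I$, an edge joining $\alpha$ and $\beta$ when $Z_\alpha\cap Z_\beta\ne\emptyset$ — is connected, and that $A$ is a non-empty proper subset of $I$, I can choose $\alpha\in A$ and $\beta\in I\setminus A$ with $Z_\alpha\cap Z_\beta\ne\emptyset$, and set $Z':=Z_\beta$. Then $Z'$ is a connected complex analytic subset of $W$ with $\dim_\C Z'=1$; moreover $Z'\not\subseteq F_0$ (an irreducible component carries a non-empty open subset of $Z$ disjoint from all the other components, hence from $F_0$), while $Z'\cap F_0\supseteq Z_\alpha\cap Z_\beta\ne\emptyset$, and $Z'\cap F_0=\bigcup_{\gamma\in A}(Z_\beta\cap Z_\gamma)$ is discrete, since each $Z_\beta\cap Z_\gamma$ with $\gamma\ne\beta$ is a proper analytic subset of the irreducible curve $Z_\beta$ and the family is locally finite. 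Finally I would apply Corollary \ref{lisup2} with $Z_1:=Z'$, $Z_2:=F_0$, and the sequence $\{F_n\}$, which converges to $F_0=Z_2$: as $Z_1\cap Z_2$ is non-empty and discrete, this forces $Z'\cap F_n\ne\emptyset$ for all $n$ large. But $Z'\subseteq Z$ and, by hypothesis (i), $F_n\cap Z=\emptyset$ for every $n$, so $Z'\cap F_n=\emptyset$ for every $n$ — a contradiction. Hence $F_0=Z$, that is $Z=\lim_{n\to+\infty}F_n$.

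The step I expect to be the crux is the correct invocation of Corollary \ref{lisup2}: one has to take the second variety to be the limit set $F_0$ itself and then, from connectedness of $Z$, locate an irreducible component $Z'$ of $Z$ that lies outside $F_0$ yet meets it in a (necessarily discrete) non-empty set — only then do the hypotheses of \ref{lisup2} apply and produce the contradiction with (i). Everything else (pure-dimensionality of $Z$, the local-maximum argument feeding \ref{lisup1}, the graph-theoretic reformulation of connectedness, discreteness of the relevant intersections) is routine, the only caveat being the need to keep the empty-limit case out of the picture.
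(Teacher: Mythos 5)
Your proof is correct and follows essentially the same route as the paper: identify $\lim_n F_n$ as a union of irreducible components of $Z$ via Corollary \ref{lisup1}, then use connectedness of $Z$ and Corollary \ref{lisup2} (applied with $Z_2=\lim_n F_n$) to contradict hypothesis (i). The only cosmetic difference is that you pick a single adjacent component $Z'$ outside the limit set, whereas the paper takes $Z_1$ to be the union of all remaining components; your explicit remarks on pure-dimensionality of $Z$ and on the degenerate case $\lim_n F_n=\emptyset$ are points the paper leaves implicit.
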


Of course the whole point of this result is that it holds when $Z$ is reducible (otherwise it is a trivial consequence of Corollary \ref{lisup1}).

\begin{proof}
Denote $Z_2=\lim_{n\to+\infty}F_n.$ Suppose that $Z_2\neq Z$. By Corollary \ref{lisup1}, $Z_2$ is the union of some family of irreducible components of $Z$. Denote by $Z_1$ the union of the remaining irreducible components of $Z$. As it is well known, the family of all irreducible components being locally finite, $Z_1$, $Z_2$ are closed, $Z_1\cap Z_2$ discrete and closed. Since $Z$ is connected, $Z_1\cap Z_2$ is nonempty.

Applying Corollary \ref{lisup2}, we deduce that $Z_1\cap F_n\neq \emptyset$ for $n$ large enough, but this contradicts assumption \emph{ii)}, unless $Z_1=\emptyset$ and $Z=\lim_{n\to\infty} F_n$.\end{proof}

%

\begin{rem}
It is clear that the assumption of irreducibility or even connectdness of  $F_n$ is not needed, and was actually not used in the proof. 
\end{rem}
\begin{lemma}\label{lisup4}
Let $W$ be a complex manifold of dimension $2$ and $D$ a domain in $W$. Let $f:D\to\C$ be a holomorphic function. Let $C$ be a (possibly reducible) connected compact complex curve in $D$, such that $f=0$ on $C$, and that the subset $\{x\in D:f(x)=0\}$ is connected and different from $C$. Then $C$ cannot separate any of its relative neighborhoods in the level set $\{\chi=0\}$ where $\chi=\Re f$.
\end{lemma}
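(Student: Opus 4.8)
The plan is to argue by contradiction and to localise near $C$. Since the connected open sets $N=$ (component of $L\cap V$ containing $C$), where $L:=\{\chi=0\}$ and $V$ runs over a neighbourhood basis of $C$ in $D$, themselves form a neighbourhood basis of $C$ in $L$, it is enough to prove that for such $N$ the set $N\setminus C$ is connected: a decomposition $N\setminus C=A\sqcup B$ into nonempty open sets would then force $N\setminus C\subseteq A$, i.e. $B=\varnothing$, which is exactly the failure of $C$ to separate its (connected) relative neighbourhoods, and the general case reduces to this one.

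First, some structure. If $f\equiv0$ then $L=D$ and $C$, having real codimension $2$, does not locally disconnect $D$, so there is nothing to prove; assume $f\not\equiv0$. Then $Y:=\{x\in D:f(x)=0\}$ is analytic of pure dimension $1$, every irreducible component of $C$ is an irreducible component of $Y$, and, writing $C'$ for the union of the remaining components of $Y$, the hypothesis gives $C'\neq\varnothing$ while the connectedness of $Y$ gives that $P:=C\cap C'$ is nonempty and discrete and that every component of $C$ meets $Y\setminus C$. The set $C'\setminus C$ is the geometric ``bridge'': it lies in $L$ (as $f=0$ there), and at a regular point $y$ of $C'$ off $C$ the level set $L$ is, in suitable coordinates, a union of finitely many half $3$--planes glued along $\{f=0\}=C'$; in particular a small ball $B(y)\subset L$ is connected, is contained in $N\setminus C$ once $y$ is taken near $P$, and meets $C'\setminus C$, $L^{+}:=\{\operatorname{Im}f>0\}\cap L$ and $L^{-}:=\{\operatorname{Im}f<0\}\cap L$. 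Finally, by continuity of $\operatorname{Im}f$ one has $\oli{L^{+}}\cap L^{-}=\varnothing$, so $L=L^{+}\sqcup L^{-}\sqcup\{f=0\}$, and for any component $\Omega$ of $N\cap L^{+}$ the boundary $\partial_{N}\Omega$ is contained in $\{f=0\}\cap N$ and is nonempty (otherwise $\Omega$ would be clopen in the connected set $N$, hence equal to $N$, impossible since $C\subset N\setminus L^{+}$).

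The heart of the matter is then: \emph{(i)} every component of $N\cap L^{+}$ and of $N\cap L^{-}$ has a point of $C'\setminus C$ in its closure; and \emph{(ii)} all the pieces of $(C'\setminus C)\cap N$ are joined to one another inside $N\setminus C$. Granting these, since any path in $N\setminus C$ from $L^{+}$ to $L^{-}$ must cross $(C'\setminus C)\cap N$, and since by \emph{(i)} and the balls $B(y)$ above each component of $N\cap L^{\pm}$ is joined inside $N\setminus C$ to a piece of $(C'\setminus C)\cap N$, statement \emph{(ii)} makes $N\setminus C$ connected. For \emph{(i)}: suppose a component $\Omega$ of $N\cap L^{+}$ had $\oli\Omega\cap(C'\setminus C)=\varnothing$, so $\oli\Omega$ meets $\{f=0\}$ only along $C$. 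Following $\Omega$ along $C$: near a regular point $q$ of a component $C_{i}$ of $C$ off $C'$ one has, after a coordinate change, $f=z^{m_{i}}$ with $m_{i}=\operatorname{mult}_{C_i}\mathrm{div}(f)$, so $L^{+}$ is locally a disjoint union of $m_{i}$ sheets; across a crossing point of two components of $C$ the local sheets on the two sides lie in a single local component of $L^{+}$; and --- the key local point --- near a point $p\in C_{i}\cap C'\subset P$ \emph{every} local sheet of $L^{+}$ has a point of $C'\setminus C$ in its closure (this reduces to: the zero set of $\Re(z^{m_{i}}g)$, $g$ a local equation of $C'$, has on its $\{\operatorname{Im}>0\}$--part only arcs ending at the zeros of $g$, i.e. on $C'$). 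Since $C$ is connected and each component meets $Y\setminus C$, $\Omega$ spreads along $C$ as far as some $p\in C_{i}\cap C'$, and there the key local point forces $\oli\Omega\cap(C'\setminus C)\neq\varnothing$, a contradiction; the same propagation, now starting from the connected pieces of $(C'\setminus C)\cap N$ (each an irreducible curve minus a discrete set, abutting $C$ at points of $P$), yields \emph{(ii)}.

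The main obstacle is precisely this ``key local point'': the analysis of $\{\Re f=0\}\setminus C$ near a point of $P$ and near the singular points of $C$ and of $\{f=0\}$, where the multiplicities of $f$ along the components of $C$ and the singularities produce several local sheets of $L^{\pm}$ and one must check each reaches $C'\setminus C$. In every instance this is a finite explicit computation on the zero set of the real part of a concrete holomorphic germ, but it is where the work lies; the convergence results of this section --- in particular Theorem \ref{lisup3}, applied to the sets $\{f=is\}$ as $s\to0^{\pm}$, which converge to all of $\{f=0\}$ --- are the natural tool for tracking how the two sides $L^{\pm}$ accumulate on $\{f=0\}$. When $C$ is contractible (for instance when $W$ is of Grauert type, by the Corollary above) one may alternatively blow $C$ down to a point $p'$ and prove the equivalent assertion that the germ at $p'$ of $\{\Re f'=0\}$, for a holomorphic germ $f'$ whose zero curve is connected and properly contains $\{p'\}$, is not disconnected by $p'$; the analysis is essentially the same.
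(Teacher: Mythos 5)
Your argument is not complete: the step you yourself label the ``key local point'' --- that near a point $p\in C\cap C'$ \emph{every} local sheet of $\{\Re f=0,\ \operatorname{Im} f>0\}$ has a point of $C'\setminus C$ in its closure, together with the companion claims that sheets merge across crossing points of $C$ and that the pieces of $(C'\setminus C)\cap N$ get joined inside $N\setminus C$ --- is exactly where the lemma's content lies, and it is asserted rather than proved. It is not a routine verification: $f$ may vanish to arbitrary multiplicities along the branches of $C$ and $C'$, and $C'$ may itself be singular at $p$, so the local structure of $\{\Re f=0\}$ is that of the zero set of $\Re$ of an arbitrary germ, not of a normal crossing model. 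Indeed the intermediate assertion that ``across a crossing point of two components of $C$ the local sheets on the two sides lie in a single local component of $L^{+}$'' already fails in general (e.g.\ $f=z^{2}w^{2}$, where $\{\Re f=0,\operatorname{Im}f>0\}$ has two local components near the crossing), so the propagation scheme as written does not go through without substantial repair. Since you explicitly defer the main obstacle, the proposal is a plan, not a proof.

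The paper's proof avoids all of this local analysis and is the intended use of Theorem \ref{lisup3}, which you mention only as ``the natural tool''. One takes a connected neighborhood $U$ of $C$ with $U\cap\{\chi=0\}$ connected and $Z:=\{f=0\}\cap U$ connected, assumes $(U\cap\{\chi=0\})\setminus C$ splits into disjoint components $A_{j}$, and notes that on each $A_{j}$ the function $\lambda=\operatorname{Im}f$ is nonconstant (else $A_{j}\subset\{f=0\}$ would not be relatively open), so $\lambda(A_{j})$ is a one--sided interval at $0$; choosing $t_{n}\to 0$ in it, the nonempty analytic curves $F_{n}\subset\{f=it_{n}\}\cap A_{j}$ are disjoint from $Z$ and have $\limsup F_{n}\subset Z$, whence a convergent subsequence must converge, by Theorem \ref{lisup3} and the connectedness of $Z$, to \emph{all} of $Z$. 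Therefore $Z\setminus C\subset\overline{A_{j}}\cap U\subset A_{j}\cup C$, i.e.\ $\emptyset\neq Z\setminus C\subset A_{j}$ for every $j$, contradicting disjointness. If you want to salvage your approach you must actually prove the local sheet statement near the singular points of $C\cup C'$; alternatively, run the limit argument above inside each putative component, which replaces the whole propagation machinery.
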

\begin{proof}
Consider any connected neighborhood $U$ of $C$ in $W$, such that $U\cap\{\chi=0\}$ is connected. By diminishing $U$ (if needed), we can assume also that the variety $Z:=\{f=0\}$ is connected. It is crucial for the proof that $Z\setminus C$ is nonempty. Suppose that $(U\cap\{\chi=0\})\setminus C$ is disconnected, and that $(A_j)$ is the family (finite or countable) of its pairwise disjoint relatively open connected components, i.e. $\cup_{j=1}^\infty A_j=(U\cap\{\chi=0\})\setminus C.$ Observe that $\oli A_j\cap U\subseteq A_j\cup C$  and $\oli A_j\cap U\cap C\neq\emptyset$ for every $j$.

Consider now any set $A_j$. Denote $\lambda={\sf Im} f$. Observe that $\lambda$ cannot be constant on $A_j$, because then $\chi_{|A_j}=\lambda_{|A_j}=0$ and consequently $A_j\subset \{f=0\}$ is not relatively open in $\{\chi=0\}$. Thus $\lambda(A_j)$ is an interval, either $(0,\epsilon_0)$ or $(-\delta_0,0)$, $\epsilon_0,\delta_0>0$. Let $\{t_n\}$ be a sequence of values belonging to $\lambda(A_j)$, such that $\lim_{n\to+\infty}t_n=0$. The sets 
$$
\{\lambda=t_n\}\cap A_j=\{f=it_n\}\cap A_j
$$  
are nonempty for every $n$. Denote by $F_n$ any of the (irreducible) connected components of this set. Since $f$ is continuous, $\limsup_{n\to+\infty} F_n\subset Z$. By the properties of the topology of (locally uniform convergence) of closed sets defined above, $F_n$ has a convergent subsequence, which we still denote by $\{F_n\}$. By Theorem \ref{lisup3} 
$\lim_{n\to+\infty} F_n=Z$ and so $Z\subset\oli A_j\cap U$. Moreover, $\oli A_j\cap U\subseteq A_j\cup C$. It follows that $\emptyset\neq Z\setminus C\subset A_j$, and so $\bigcap_jA_j$ is nonempty. This is a contradiction. 
\end{proof}

\begin{lemma} \label{sconnessione}Let $W, B$ be normal, Hausdorff topological spaces and $f:W\to B$ a continuous, proper, surjective map such that
\\

$(\star)$\ \ \parbox{10cm}{ for every $b\in B$, for every $Y$ connected component of $f^{-1}(b)$ there is $U$ a neighbourhood of $b$ such that  if $X_0$ is the connected component of $f^{-1}(U)$ containing $Y$ then for every $b'\in U$, $f^{-1}(b')\cap X_0$ is connected and non empty.}
\\

\noindent If $B$ is simpy connected, and $W$ is connected, then $f^{-1}(b)$ is connected for every $b\in B$.
\end{lemma}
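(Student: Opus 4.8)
The plan is to realise $f$, after replacing $W$ by its \emph{space of fibre components}, as a genuine covering map of $B$, and then to invoke that a connected covering of a simply connected space is trivial. First I would set $\widetilde B=\{(b,Y):b\in B,\ Y\text{ a connected component of }f^{-1}(b)\}$, let $g\colon W\to\widetilde B$ send $w$ to $(f(w),Y_w)$ with $Y_w$ the component of $f^{-1}(f(w))$ through $w$, and let $\pi\colon\widetilde B\to B$ be the projection, so that $f=\pi\circ g$. I would topologise $\widetilde B$ by declaring open the images of the local sections of $\pi$ produced by $(\star)$.

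Next I would extract the elementary content of $(\star)$: if $U$ and $X_0$ are as in $(\star)$ for a pair $(b,Y)$, then for every $b'\in U$ the connected set $f^{-1}(b')\cap X_0$ is in fact a whole connected component of $f^{-1}(b')$ (a connected subset of a fibre containing a component equals that component), and $X_0$ is the disjoint union of these slices. Hence $b'\mapsto(b',f^{-1}(b')\cap X_0)$ is a section of $\pi$ over $U$; using that $W$ is locally connected (so that $X_0$, a component of the open set $f^{-1}(U)$, is open) one gets that $g$ is continuous and that $g^{-1}$ of this section's image equals $X_0$. Two such section images are disjoint or coincide near a common point (apply $(\star)$ there), so they form a basis and $\pi$ is a local homeomorphism; and $\widetilde B=g(W)$ is connected and nonempty because $W$ is, while $\pi$ inherits properness from $f$.

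The heart of the proof is to upgrade ``local homeomorphism'' to ``covering map''. Fix $b$. Each component of $f^{-1}(b)$ is open in $f^{-1}(b)$ (it is $f^{-1}(b)\cap X_0$ with $X_0$ open), hence clopen in the compact set $f^{-1}(b)$, so there are only finitely many, $Y_1,\ldots,Y_m$, with corresponding $U_{Y_i}$ and $X_0^{(i)}$. Now $\bigcup_i X_0^{(i)}$ is open and contains the compact set $f^{-1}(b)$; since $f$ is proper, hence closed, the tube lemma yields a neighbourhood $U$ of $b$, which I take connected and inside $\bigcap_i U_{Y_i}$, with $f^{-1}(U)\subset\bigcup_i X_0^{(i)}$. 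Then the restriction $f\colon X_0^{(i)}\cap f^{-1}(U)\to U$ is proper, hence closed, is surjective and has connected fibres by $(\star)$, so its source is connected (a closed surjection onto a connected space with connected fibres has connected total space); being connected, nonempty, and meeting $f^{-1}(b)$ exactly in $Y_i$, it is the component $X_i$ of $f^{-1}(U)$ through $Y_i$. The $X_i$ are pairwise distinct, hence disjoint, and they cover $f^{-1}(U)$, so $\pi^{-1}(U)=g(X_1)\sqcup\cdots\sqcup g(X_m)$ with each $g(X_i)$ mapped homeomorphically onto $U$: thus $\pi$ is a covering map. I expect this step — turning the purely local data of $(\star)$ into a uniform splitting of $f^{-1}(U)$, in particular excluding components of $f^{-1}(U)$ that miss $f^{-1}(b)$ — to be the main obstacle, and it is exactly here that properness (closedness) of $f$ is indispensable. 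One could also bypass $\widetilde B$ altogether and run a direct path-lifting/monodromy argument, but the bookkeeping is the same.

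Finally, $\pi\colon\widetilde B\to B$ is a covering with $\widetilde B$ connected and nonempty, while $B$ is simply connected (and, being an interval in the situation where this Lemma is applied, as tame point-set-theoretically as one could want); so $\pi$ admits a continuous section $s$, whose image is open and closed, and connectedness of $\widetilde B$ forces $s(B)=\widetilde B$. Hence $\pi$ is a bijection — equivalently, the number of sheets equals $[\pi_1(B):\pi_*\pi_1(\widetilde B)]=1$ — so every fibre $f^{-1}(b)$ consists of a single connected component, i.e.\ is connected.
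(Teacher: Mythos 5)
Your construction of the space of fibre components $\widetilde B$ with $f=\pi\circ g$ is exactly the paper's quotient $W/\!\sim$ by the relation ``same fibre, same component'', and your endgame (a connected covering of a simply connected base is trivial, hence fibres are connected) is the paper's argument verbatim; the only difference is that you spell out the ``easy to check'' covering-map verification (openness of components via local connectedness, the tube-lemma splitting of $f^{-1}(U)$ into the $X_i$), which the paper leaves implicit. So the proposal is correct and takes essentially the same route as the paper.
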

\begin{proof} Define on $W$  the following equivalence relation: given $u,w\in W$, we write $u\sim w$ if $f(u)=f(w)$ and both lay in the same connected component of $f^{-1}(f(u))$. Let $\pi:W\to W/\sim$ be the quotient map. It is easy to check that the map induced by $f$, namely $\bar{f}:W/\sim\to B$, is a covering map by $(\star)$, together with properness of $f$.

As $B$ is simply connected, the covering spaces of $B$ can be only disjoint unions of copies of $B$, but if $W$ is connected, so is $W/\sim$. Therefore $\bar{f}$ is an homeomorphism, hence $f^{-1}(b)$ is connected for every $b\in B$ (and non empty by surjectivity). \end{proof}

We note that a locally trivial fibration satisfies condition $(\star)$.

\begin{teorema}\label{affettato} Let $X$ be a connected smooth manifold, $f:X\to \R$ a proper smooth surjective function such that
\begin{enumerate}
\item the critical values of $f$ are discrete
\item every connected component of $\Crt(f)$ does not disconnect any of its open neighbourhoods in $X$
\item for any $t\in\R$ and any connected component $Y$ of $f^{-1}(t)$, $Y\setminus\Crt(f)$ is connected and non empty.
\end{enumerate}
Then $f^{-1}(t)$ is connected and non empty for every $t\in\R$.
\end{teorema}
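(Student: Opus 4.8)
The plan is to deduce the theorem from Lemma~\ref{sconnessione}, applied to $f\colon X\to\R$. Every hypothesis of that lemma is immediate here — $X$ and $\R$ are normal Hausdorff, $\R$ is simply connected, $X$ is connected, and $f$ is continuous, proper and surjective — so the whole task reduces to checking condition $(\star)$: for each $t\in\R$ and each connected component $Y$ of $f^{-1}(t)$, one must exhibit a neighbourhood $U\ni t$ so that, denoting by $X_0$ the connected component of $f^{-1}(U)$ containing $Y$, the set $f^{-1}(t')\cap X_0$ is connected and non-empty for every $t'\in U$. Once $(\star)$ is in hand, Lemma~\ref{sconnessione} yields at once that $f^{-1}(t)$ is connected, and it is non-empty by surjectivity.

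I would first settle the case of a regular value $t$. By (1) there is $\epsilon>0$ with $(t-\epsilon,t+\epsilon)$ free of critical values; then $f$ restricted to $f^{-1}((t-\epsilon,t+\epsilon))$ is a proper submersion, so Ehresmann's fibration theorem makes it a locally trivial bundle over the interval, and the interval being contractible this bundle is trivial, $f^{-1}((t-\epsilon,t+\epsilon))\cong f^{-1}(t)\times(t-\epsilon,t+\epsilon)$. Hence $X_0=Y\times(t-\epsilon,t+\epsilon)$ and $f^{-1}(t')\cap X_0\cong Y$ is connected and non-empty; here (3) is automatic and essentially only properness is used.

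The substance of the proof is the case of a critical value $t$. Using (1), choose $\epsilon>0$ so that $(t-\epsilon,t)$ and $(t,t+\epsilon)$ carry no critical value, and set $U=(t-\epsilon,t+\epsilon)$. On $f^{-1}((t,t+\epsilon))$ and on $f^{-1}((t-\epsilon,t))$ the map $f$ is again a proper submersion, hence a trivial bundle, so each of these open sets is a disjoint union of ``cylinders'' $Z\times(t,t\pm\epsilon)$ indexed by the connected components $Z$ of a fixed regular level on the respective side. Since $X_0$ is a connected component of the open set $f^{-1}(U)$ in the locally connected space $X$, each component of $f^{-1}(t)$ and each cylinder is either contained in $X_0$ or disjoint from it; thus $X_0$ is a union of some components of $f^{-1}(t)$ (among them $Y$) together with exactly those cylinders whose ends meet $X_0\cap f^{-1}(t)$. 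What remains is to show $X_0\cap f^{-1}(t)=Y$ and that precisely one cylinder on each side lies in $X_0$. For the regular part this comes from (3): near $p\in Y\setminus\Crt(f)$ the function $f$ is a submersion, so there is a unique germ of $\{f>t\}$ at $p$, sitting in a single right cylinder $Z^+_p$, and as $p$ varies over the connected set $Y\setminus\Crt(f)$ the cylinder $Z^+_p$ is locally constant, hence one and the same cylinder $Z^+$; similarly one obtains a single $Z^-$ on the left. Any further cylinder in $X_0$, or any further component of $f^{-1}(t)$ in $X_0$, would be glued to $f^{-1}(t)$ only along $\Crt(f)$; if $K$ is the component of $\Crt(f)$ through such a gluing point, then $f(K)$ is connected and contained in the discrete critical-value set, so $f(K)=\{t\}$, whence $K\subseteq f^{-1}(t)$ and, as it meets $Y$, $K\subseteq Y$. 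Applying (2) to $K$ inside a small connected neighbourhood lying over $U$, one sees that the cylinders attached near $K$ are already joined to $Y\setminus\Crt(f)$ and that $K$ cannot act as a separating bridge to another component of $f^{-1}(t)$. Therefore $X_0\cap f^{-1}(t)=Y$ and $f^{-1}(t^\pm)\cap X_0=Z^\pm$, all connected and non-empty, so $(\star)$ holds and Lemma~\ref{sconnessione} completes the argument.

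The hard part is precisely this local analysis at a critical level: one must prevent $X_0$ from absorbing spurious components of the neighbouring level sets, or spurious extra components of $f^{-1}(t)$, that are attached only through the critical locus — and this is exactly what hypotheses (2) and (3) are designed to forbid. A minor technical point to keep an eye on is that $f^{-1}(t)$ may a priori have infinitely many connected components; I would circumvent this by reasoning, via compactness of $f^{-1}(t)$ and normality of $X$, with clopen subsets of $f^{-1}(t)$ and the open neighbourhoods they induce in $X$, instead of with individual components.
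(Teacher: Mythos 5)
Your overall architecture coincides with the paper's (verify condition $(\star)$ and invoke Lemma \ref{sconnessione}, using Ehresmann away from the critical value), and the regular-value case is fine; but the critical-level bookkeeping, which you yourself identify as the substance of the proof, has a genuine gap at its decisive step. Two assertions are made there without proof, and the first is not even correct as stated. (i) You claim that any further cylinder in $X_0$, or any further component of $f^{-1}(t)$ in $X_0$, ``would be glued to $f^{-1}(t)$ only along $\Crt(f)$'', through a component $K$ of $\Crt(f)$ that meets $Y$. A priori a cylinder contained in $X_0$ could accumulate on regular points of a \emph{different} component $Y_1$ of $f^{-1}(t)$ (hypothesis (3) gives uniqueness of the upper germ only along $Y\setminus\Crt(f)$ and says nothing about cylinders attached elsewhere), which would put $Y_1$ inside $X_0$ with no gluing along $\Crt(f)$ at all; conceivably a single cylinder could even accumulate on two components and bridge them. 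To exclude this you need a fact nowhere stated in your proposal: the set where a cylinder $Z\times(t,t+\epsilon)$ accumulates on $f^{-1}(t)$ is nonempty, compact and \emph{connected} (e.g.\ as the decreasing intersection of the compact connected sets $\overline{Z\times(t,s)}$, $s\downarrow t$), hence lies in a single component of $f^{-1}(t)$; combined with the nonemptiness part of (3) (every component has a regular point, hence at least one attached cylinder per side, hence there are finitely many components -- so your closing ``minor technical point'' is actually load-bearing) this yields the picture of one component per bundle of cylinders, from which $X_0\cap f^{-1}(t)=Y$ follows. (ii) The sentence ``Applying (2) to $K$ \dots\ one sees that the cylinders attached near $K$ are already joined to $Y\setminus\Crt(f)$ and that $K$ cannot act as a separating bridge'' is a restatement of what must be proved, not an argument. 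The actual use of (2) requires something like: if a right cylinder $Z_1$ accumulated on $f^{-1}(t)$ only inside $K$, then for a small connected neighbourhood $N$ of $K$ contained in $f^{-1}(U)$ the set $Z_1\cap N$ would be a nonempty proper clopen subset of $N\setminus K$, contradicting (2) -- with the degenerate case $N\setminus K\subseteq Z_1$ excluded because it would make $K$ clopen in $Y$, contradicting (3). None of this appears in your text, and it is exactly where hypotheses (2) and (3) do their work.

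For comparison, the paper sidesteps the cylinder bookkeeping: it uses (2) to conclude that $X_0'=X_0\setminus\Crt(f)$ is connected, takes a tubular neighbourhood $U$ of $Y'=Y\setminus\Crt(f)$ (connected by (3)) with $U^{+}$, $U^{-}$ connected, and then shows that every connected component $V$ of $X_0^{\pm}$ has nonempty boundary inside $X_0'$, necessarily lying on the regular part of the level $\{f=t_0\}$, hence meets $U^{\pm}$; therefore $X_0^{\pm}$ is connected, and Ehresmann together with Lemma \ref{sconnessione} applied to $X_0^{\pm}$ gives connectedness of the nearby fibres. If you prefer your decomposition into cylinders, it can be made to work, but you must supply the connected-attachment-set lemma and the clopen argument in $N\setminus K$ sketched above; and note that, for $(\star)$, the fibre over $t$ itself must also be checked (i.e.\ $f^{-1}(t)\cap X_0=Y$), a point your present dichotomy does not actually establish.
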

\begin{proof}
We will prove that $f$ enjoys property $(\star)$ from Lemma \ref{sconnessione}.

Consider $t_0\in \R$ and let $Y$ be a connected component of $f^{-1}(t_0)$. There exists $\delta>0$ such that no critical values of $f$ are contained in $(t_0-\delta,t_0)$ or $(t_0,t_0+\delta)$. Let $X_0$ be the connected component of
$$\{x\in X\ :\ t_0-\delta<f(x)<t_0+\delta\}$$
containing $Y$.

\medskip

Up to taking $\delta$ small enough, If $Y\cap \Crt(f)=\emptyset$, then $X_0\cap \Crt(f)=0$ and, by Ehresmann theorem (see Theorem 9.3 and Remark 9.4 in \cite{voi}), all the level sets are diffeomorphic. In particular, $Y$ being connected, every level set of $f\vert_{X_0}$ is connected; moreover, as $Y$ does not contain critical points, it cannot be neither a maximum nor a minimum set for $f$, hence there is $\delta'>0$ such that $f(X_0)\supseteq(t_0-\delta',t_0+\delta')$.

\medskip

If $Y\cap \Crt(f)\neq\emptyset$, consider $Y'=Y\setminus \Crt(f)$ as a (connected) submanifold of $X_0'=X_0\setminus\Crt(f)$, which is also connected. Take $U$ to be a tubular neighbourhood of $Y'$ in $X_0'$ and set
$$U^+=U\cap\{f>t_0\}\qquad U^-=U\cap\{f<t_0\}\;.$$
By definition, $Y'$ is disjoint from $\Crt(f)$, so it cannot contain extremal points for $f$, therefore $U^+$ and $U^-$ are both non empty and disjoint; moreover, $U$ is diffeomorphic to the total space of a (real) line bundle over $Y'$, hence $U\setminus Y'=U^+\cup U^-$ has at most $2$ connected components, so $U^+$ and $U^-$ are connected.

Obviously, $U^\pm\subseteq X_0^\pm=X_0\cap\{f\gtrless t_0\}$; let $V$ be a connected component of $X_0^+$.

As $X_0'$ is connected and $V\neq X_0'$,  $bV\cap X_0'$ is nonempty. Since $V$ is relatively closed in $X_0'$, $bV\cap X_0'\subseteq bX_0^+\cap X_0'=Y'$.
Thus $U\cap V$ is non empty and so is $U^+\cap V$ (because $V\cap U^+=V\cap U\cap X_0^+=V\cap U$); as this holds for every connected component of $X_0^+$ and as $U^+$ is connected, we conclude that $X_0^+$ is connected.

By Ehresmann theorem, $f\vert_{X_0^+}$ is a locally trivial fibration, hence satisfies condition $(\star)$ and $f(X_0)$ is a connected subset of $\R$, i.e. an interval, which is simply connected. By Lemma \ref{sconnessione}, all the level sets are connected.

The same holds for $X_0^-$ and hence for $X_0$. Moreover, it is clear that there exists $\delta''>0$ such that $f(X_0)\supseteq (t_0-\delta'',t_0+\delta'')$.

\medskip

Hence, $f$ enjoys property $(\star)$. Applying again Lemma \ref{sconnessione}, we conclude that every level set of $f$ is connected and, by surjectivity, non empty.
\end{proof}

{\bf Proof of Theorem \ref{lisu}} Let $C$ be any connected component of the critical set of $\chi$. 

The critical set is locally defined by holomorphic equations $\p\chi/\p z=0$, $\p\chi/\p z=0$, where $z,w$ are local coordinates. It follows that $C$ is a complex curve (possibly reducible). Moreover, since $\chi_{|C}=c$, $c$ a constant, and $\chi$ is proper, $C$ is compact complex curve or a point. 

Claim: $C$ cannot separate any of its relative neighborhoods in the level $\chi=c$.


As $X$ is of Grauert type, by Lemma \ref{Atropo}-(2) there exist a neighbourhood $U$ of $C$ in $X$ and a holomorphic function $F:U\to\C$ such that $C\subsetneq \{F=0\}$. Our claim follows from Lemma \ref{lisup4}.

In case $C$ is just a point $p$, the proof is similar, just simpler. We can take for $U$ a simply-connected neighborhood of $p$ and $\lambda$ the conjugate of $\chi{|U}$, $f=\chi+i\lambda-c$ in $U$. Shrink $U$ so that $Z=\{f=0\}\cap U$ is connected. Clearly, $Z\neq C$. Inspection of the proof of Lemma \ref{lisup4} shows that it holds for $C$- a single point- without any change. The claim is proved. 

As $\chi$ satistfies the hypotheses of Theorem \ref{affettato}, we conclude that the level sets of $\chi$ are connected.  \hfill $\Box$

\medskip

The following Corollary is obvious.

\begin{corol} Grauert type surfaces have at most two ends.\end{corol}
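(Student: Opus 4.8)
The plan is to derive the corollary directly from Theorem~\ref{lisu} together with the structural description of Grauert type surfaces recalled in the introduction. First I would recall that, for a Grauert type surface $X$, there is (possibly after removing a compact complex curve or passing to a holomorphic double cover) a proper pluriharmonic function $\chi:X'\to(a,b)\subseteq\R$, where $X'$ is the (connected) surface obtained by this construction. By Theorem~\ref{lisu}, every level set $\chi^{-1}(t)$ is connected. Since $\chi$ is proper and the level sets are connected, a standard argument shows that the number of ends of $X'$ equals the number of ends of the interval $(a,b)\subseteq\R$: indeed, the complement of a large compact set $K=\chi^{-1}([c,d])$ in $X'$ is $\chi^{-1}((a,c))\cup\chi^{-1}((c,d)^c\cap(a,b))$, and each piece $\chi^{-1}((a,c))$, $\chi^{-1}((d,b))$ is connected (being swept out by connected level sets over a connected interval, via the same kind of reasoning as in the proof of Theorem~\ref{affettato}), hence contributes exactly one end when the corresponding endpoint $a$, resp. $b$, is "at infinity" in $X'$, i.e. when $\chi$ does not attain that endpoint. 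Thus $X'$ has at most two ends.

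The second step is to check that the operations relating $X$ and $X'$ do not increase the number of ends beyond two. Removing a compact complex curve $C$ from $X$ does not change the number of ends, since $C$ has a compact (hence relatively compact) neighborhood; so the ends of $X$ and of $X\setminus C$ coincide. Passing to a holomorphic double cover $\widetilde X\to X$ could in principle double the number of ends, but here the direction we need is the other one: we apply Theorem~\ref{lisu} to the cover $\widetilde X$ (which is itself of Grauert type and where the proper pluriharmonic function lives), conclude $\widetilde X$ has at most two ends, and then observe that a quotient by a finite group action has at most as many ends as the covering space, so $X$ has at most two ends as well. I would phrase this carefully: the point is that in all of the cases i, ii, iii-a, iii-b of the Main Theorem of \cite{crass}, the surface sits between a space carrying a proper pluriharmonic function and its image, and the end-count is controlled throughout.

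The main obstacle I expect is the bookkeeping in the first step: making precise that $\chi^{-1}((a,c))$ is connected and contributes exactly one end. The cleanest route is to note that $\chi$ restricted to $\chi^{-1}((a,c))$ is again a proper pluriharmonic function on a Grauert type surface onto an interval, so Theorem~\ref{lisu} applies and its level sets are connected; then connectedness of the whole piece follows because a proper map onto a connected base with connected fibers has connected total space (this is exactly the content extracted in Lemma~\ref{sconnessione} / Theorem~\ref{affettato}, applied to the interval which is simply connected). Once connectedness of each "half" $\chi^{-1}((a,c))$ and $\chi^{-1}((d,b))$ is in hand, the end count is immediate: each unbounded direction of the interval gives one end and there are at most two such directions. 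I would keep the write-up short, citing Theorem~\ref{lisu} and the introduction's reduction, since the corollary is indeed "obvious" once those are granted.

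\begin{proof}
By the reduction recalled in the introduction (following \cite[Theorems 5.1 and 6.1]{mst}), after removing a compact complex curve and/or passing to a holomorphic double cover we obtain a connected Grauert type surface $X'$ carrying a proper pluriharmonic function $\chi:X'\to(a,b)$, $-\infty\le a<b\le+\infty$. Neither removing a compact curve nor passing to a finite cover increases the number of ends (a finite cover has at least as many ends as its base, and a compact curve has a relatively compact neighborhood), so it suffices to bound the number of ends of $X'$.

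Fix $c,d$ with $a<c<d<b$ and let $K=\chi^{-1}([c,d])$, a compact subset of $X'$ by properness. Then
$$
X'\setminus K=\chi^{-1}\big((a,c)\big)\,\sqcup\,\chi^{-1}\big((d,b)\big).
$$
The restriction of $\chi$ to $\chi^{-1}((a,c))$ is a proper pluriharmonic function on a Grauert type surface with values in the interval $(a,c)$; by Theorem \ref{lisu} all its level sets are connected, and since $(a,c)$ is connected it follows (as in Lemma \ref{sconnessione}, the base being simply connected) that $\chi^{-1}((a,c))$ is connected, or empty. The same applies to $\chi^{-1}((d,b))$. Hence $X'\setminus K$ has at most two connected components, each of which is non-compact only if the corresponding endpoint $a$ or $b$ is not attained by $\chi$. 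Letting $c\downarrow a$ and $d\uparrow b$ we conclude that $X'$ has at most two ends, and therefore so does $X$.
\end{proof}
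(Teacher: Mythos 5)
Your proof is correct and is essentially the route the paper intends: the paper gives no argument (it declares the corollary obvious), the content being exactly that Theorem \ref{lisu} plus properness of $\chi$ makes $X'\setminus\chi^{-1}([c,d])$ have at most two unbounded components for a cofinal family of compacta, together with the reduction of \cite{mst} recalled in the introduction. One caveat on wording: the inequality you actually need is that the number of ends of $X$ is at most that of $X'$ --- a finite cover has at least as many ends as its base, and removing a compact curve cannot \emph{decrease} the number of ends (it can very well increase it, by creating ends converging to the removed curve) --- so your sentence ``neither removing a compact curve nor passing to a finite cover increases the number of ends'' states the comparison in the wrong direction, even though the parenthetical justification and the use you make of it are the correct ones.
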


\section{A class of examples}

The purpose of this section is to provide examples to show that passing to a double cover  in the case \emph{iii-b} of the Main Theorem proved in \cite{mst} is unavoidable. The result can summarized as follows.
\begin{teorema}\label{teo1}
There exists a weakly complete complex surface $X^\ast$ such that
\begin{enumerate}
\item $X^\ast$ admits a real analytic plurisubharmonic exhaustion function and is a Grauert type surface
\item $X^\ast$ admits a (holomorphic) involution $T$ without fixed points
\item $X=X^\ast/\langle T\rangle$ is again a Grauert type surface with a real analytic plurisubharmonic exhaustion function
\item every proper pluriharmonic function on $X$ is constant.
\end{enumerate}
\end{teorema}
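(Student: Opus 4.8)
The plan is to construct $X^\ast$ explicitly as a disc bundle, or a quotient of a product, over an elliptic curve (or a torus), in such a way that the flat structure produces the Levi-flat foliation with dense leaves. Concretely, I would start with a complex torus $E=\C/\Lambda$ and a flat line bundle $L\to E$ whose monodromy representation $\pi_1(E)\to\C^\ast$ lands in the unit circle but has \emph{irrational} (i.e. topologically non-trivial, dense image) winding; then take $X^\ast$ to be a suitable neighbourhood of the zero section inside the total space of $L$, or better a quotient $(\C\times\C^\ast)/\sim$ where the $\Z^2$-action is by the lattice translations in the first factor twisted by the unitary multipliers in the second. The modulus $|w|$ of the fibre coordinate then descends to a function whose level sets $\{|w|=\mathrm{const}\}$ are the Levi-flat hypersurfaces, foliated by the images of $\C\times\{w_0\}$, and irrationality of the multipliers makes each leaf dense in its level set. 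A real analytic plurisubharmonic exhaustion is obtained from $\alpha=\lambda(\log|w|)$ for an appropriate convex increasing $\lambda$, after restricting to the annular range $r_1<|w|<r_2$ (or $0<|w|<r_2$) where the construction is complete. This gives (1).

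For (2), the involution $T$ should come from a fixed-point-free involution of the base torus (a translation by a $2$-torsion point $\tau_0$ with $2\tau_0\in\Lambda$) combined, if needed, with a square root of the corresponding multiplier, chosen so that $T$ has no fixed points on $X^\ast$ — the absence of fixed points on the fibres is automatic if $T$ acts on the $w$-coordinate by a unimodular scalar, and on the base it is the free translation. Then $X=X^\ast/\langle T\rangle$ is again a flat bundle of the same type over the quotient torus $E/\langle \tau_0\rangle$, and the descent of $\alpha$ (which is $T$-invariant by construction) gives the plurisubharmonic exhaustion, so that $X$ is again of Grauert type; this is (3). I would verify Grauert type for $X$ directly rather than by appeal to \cite{crass}, checking that its regular level sets are Levi-flat with dense leaves, which follows because the monodromy on $X$ is the monodromy on $X^\ast$ restricted to an index-two subgroup and still has dense image.

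The heart of the matter is (4): every proper pluriharmonic function on $X$ is constant, equivalently the pluriharmonic function $\chi=\log|w|$ that exists upstairs on $X^\ast$ does \emph{not} descend to $X$. The mechanism is that the unitary multiplier giving $X$ from its universal-type cover is a genuine square root situation — the relevant cohomology class lies in $H^1(X;\R)$ and is a nonzero $2$-torsion obstruction, so that $\Re F$ for the natural multivalued holomorphic $F$ has a period that is an odd multiple of a fixed nonzero number, preventing single-valuedness. To turn this into a proof that \emph{all} proper pluriharmonic functions on $X$ are constant, I would invoke the structure theory: by \cite[Theorems 5.1 and 6.1]{mst} (quoted in the introduction), any proper pluriharmonic $\chi$ on a Grauert type surface, after removing a compact curve or passing to a double cover, satisfies $\alpha=\lambda\circ\chi$; hence a proper pluriharmonic function on $X$ would have its level sets among the Levi-flat levels of $\alpha$, so would be a function of $\log|w|$, i.e. a reparametrisation of $\chi$ — but we have arranged that $\chi$ itself is not single-valued on $X$, and Theorem \ref{lisu} (connectedness of levels) rules out the remaining possibility of a proper pluriharmonic function with disconnected fibres assembled from pieces. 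The main obstacle I anticipate is making the non-descent argument airtight: I must show not merely that the obvious candidate $\chi$ fails to descend, but that no other pluriharmonic function works, which requires knowing that every pluriharmonic function on $X$ is pulled back from one on $X^\ast$ (true because $X^\ast\to X$ is a double cover and one can average, but averaging a harmonic conjugate is delicate) and then that every $T$-invariant pluriharmonic function on $X^\ast$ is a function of $\chi$ alone — this last step is where the density of the leaves is used decisively, since a pluriharmonic function is constant along the complex leaves and the leaves are dense in each Levi-flat level.
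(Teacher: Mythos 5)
There is a genuine gap, and it is located exactly at the heart of the theorem, point (4). In your construction the involution $T$ covers a free translation of the base and acts on the fibre coordinate by a \emph{unimodular scalar}. But then $|w|$, hence $\chi^\ast=\log|w|$, is $T$-invariant, so it descends to $X=X^\ast/\langle T\rangle$ as a proper pluriharmonic function — the exact opposite of (4). Your ``$2$-torsion period obstruction'' does not save this: with unitary multipliers $\log|w|$ is already a single-valued function on the quotient; only its harmonic conjugate can acquire periods, and that is irrelevant to the existence of a proper pluriharmonic function. The mechanism the paper uses is anti-invariance rather than invariance: the involution is $T(p,w)=(S(p),1/w)$, inverting the fibre, so that $\chi^\ast\circ T=-\chi^\ast$. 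For such a $T$ to be well defined the bundle must be $S$-antisymmetric, $S_\ast\eta\cong\eta^{-1}$, represented by a unimodular $S$-antisymmetric cocycle with respect to an $S$-stable covering (Lemma \ref{lem5}). Then (4) follows in one line: a pluriharmonic $\chi$ on $X$ pulls back to a $T$-invariant pluriharmonic function on $X^\ast$, which by \cite[Lemma 5.4 (iii)]{mst} must be $a\chi^\ast+b$ (this is the correct form of your ``every $T$-invariant pluriharmonic function is a function of $\chi^\ast$'' step), and $a\chi^\ast+b=-a\chi^\ast+b$ forces $a=0$.

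Moreover, the elliptic-curve base cannot be repaired by simply switching to the fibre-inverting involution. If $S$ is translation by a $2$-torsion point of $E=\C/\Lambda$, then every degree-zero line bundle satisfies $S_\ast L\cong L$, so the antisymmetry $S_\ast L\cong L^{-1}$ needed for the inversion-type involution forces $L^{\otimes 2}$ to be holomorphically trivial, i.e. $L$ unipotent — and then the leaves in the level sets of $\log|w|$ are closed rather than dense, so $X^\ast$ is not of Grauert type and (1) fails. This is precisely the paper's remark after Proposition \ref{pro2}: over a torus the kernel of $\xi\mapsto\xi\otimes S_\ast(\xi)$ is finite and consists of unipotent bundles. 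For this reason the paper takes the base to be a compact Riemann surface of odd genus $g>1$ with a fixed-point-free involution (Lemma \ref{lemma_odd}) and uses the Picard-variety dimension count of Proposition \ref{pro2} to produce a \emph{non-unipotent} $S$-antisymmetric bundle, whose square carries the required unimodular $S$-antisymmetric cocycle. The remaining ingredients of your outline (the modulus of the fibre coordinate as the proper pluriharmonic function upstairs, density of leaves from non-unipotency, descent of the exhaustion $\alpha^\ast=|\chi^\ast|^2$) are consistent with the paper, but both your choice of base and the form of your involution must be changed for the theorem to hold.
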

%
 The surface $X^\ast$ discussed in Theorem \ref{teo1} will be obtained by modification of \cite[Example 2.3]{mst}. In rough outline, consider compact Riemann surfaces $M$ which admit a fixed-point free conformal involution $S$, and $X^\ast$ the space of some, topologically trivial, line bundle on $M$ without the $0$ section, where the line bundle transformed by $S$ is equivalent to the dual line bundle. There are more details to be taken care of, as discussed next.
 
\begin{rem} We will show, in fact, that there exist infinitely many such surfaces.\end{rem} 
%
%

Before starting with our construction, we recall a rather elementary result.

\begin{lemma}\label{lemma_odd}Let $g>0$ be an odd integer. Then there exists a compact Riemann surface $M_g$ of genus $g$ with a conformal involution $S:M_g\to M_g$ without fixed points.\end{lemma}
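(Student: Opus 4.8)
The plan is to construct $M_g$ explicitly as a branched or unbranched double cover of a surface of smaller genus, arranging the involution $S$ to be the deck transformation. Since $g$ is odd, write $g = 2h+1$ for some $h \geq 0$; by the Riemann--Hurwitz formula, an unramified double cover $M_g \to N$ of a compact Riemann surface $N$ of genus $h+1$ has genus $g = 2(h+1) - 1 = 2h+1$, which is exactly what we want, and the deck transformation of an unramified cover is automatically fixed-point free.

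So the first step is to produce, for every $k = h+1 \geq 1$, a compact Riemann surface $N$ of genus $k$ admitting an unramified connected double cover. This is equivalent to exhibiting a surjection $\pi_1(N) \to \Z/2\Z$, or equivalently a nonzero element of $H^1(N;\Z/2\Z) \cong (\Z/2\Z)^{2k}$; since $k \geq 1$ this group is nontrivial, so such a cover always exists. Concretely, one can take $N$ to be \emph{any} compact Riemann surface of genus $k \geq 1$ (for instance a hyperelliptic one given by an explicit equation, or simply invoke the existence of Riemann surfaces of every genus), pick a nonseparating simple closed curve, and take the connected double cover corresponding to the index-$2$ subgroup of $\pi_1(N)$ of loops with even intersection number with that curve.

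The second step is to verify that the total space $M_g$ of this cover is connected (guaranteed by choosing the cover corresponding to a surjection onto $\Z/2\Z$ rather than a split map), that it inherits a canonical complex structure making $M_g \to N$ holomorphic, and that the nontrivial deck transformation $S$ is a holomorphic involution. That $S$ has no fixed points is immediate: a fixed point of a deck transformation of a covering map forces the transformation to be the identity on the connected component containing it, contradicting nontriviality. Finally, Riemann--Hurwitz, $\chi(M_g) = 2\chi(N)$ with no ramification contribution, gives $2 - 2g = 2(2 - 2k) = 2(2 - 2(h+1))$, hence $g = 2h+1$, confirming the genus.

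There is essentially no hard part here; the only thing to be careful about is ensuring \emph{connectedness} of $M_g$ (achieved by using a cover that does not come from a trivial homomorphism $\pi_1(N) \to \Z/2\Z$) and making sure the genus bookkeeping is done correctly so that odd $g$ corresponds to integer genus $k = (g+1)/2$ of the base --- which works precisely because $g$ is odd. One could alternatively phrase the whole argument in terms of a fixed-point-free holomorphic involution on the base being impossible to need, but the cleanest route is the covering-space construction above, and it makes the fixed-point-free property transparent.
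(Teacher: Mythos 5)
Your proposal is correct and is essentially the paper's own argument: take an unramified connected double cover of a compact Riemann surface of genus $(g+1)/2$, obtained from a surjection of its fundamental group onto $\Z/2\Z$, pull back the complex structure, use Riemann--Hurwitz to compute the genus of the cover, and take $S$ to be the (automatically fixed-point-free) nontrivial deck transformation. Your extra care about connectedness of the cover is a fine point the paper leaves implicit, but the route is the same.
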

\begin{proof} We have $g=2g'-1$ for some positive integer $g'$. Let $\Sigma$ be a compact Riemann surface of genus $g'$ and let $\pi:V\to \Sigma$ be a (topological) double cover of $\Sigma$. 
Such a double cover exists because, for example, the abelianization of $\pi_1(\Sigma)$, which is $H^1(\Sigma, \Z)=\Z^{2g'}$, admits a surjection on $\Z/2\Z$. Therefore the fundamental group of $\Sigma$ has a (normal) subgroup of index $2$.

Now, we can pull back the complex structure from $\Sigma$ to $V$, giving it the structure of a Riemann surface, so that $\pi$ becomes a holomorphic map; by the Riemann-Hurwitz formula for an unramified cover of degree $2$,  $\chi(V)=2\chi(\Sigma)$ and, remembering that $\chi(M_g)=2-2g$, we immediately obtain that the genus of $V$ is $2g'-1$.

Then $V$ is  our $M_g$ and we have that there exists $S\in \textrm{Aut}(M_g)$ such that $\Sigma= M_g/\{I,S\}$. Hence $S$ is the involution we were looking for.
\end{proof}

\subsection{$S$-antisymmetric bundles and cocycles}

 We recall now some basic definitions and notation.
 
 If $S$ is an automorphism of a Riemann surface $M$ and $\xi$ a complex line bundle with cocycle $\{\xi_{ij}\}_{i,j\in A}$, $\xi_{ij}:U_i\cap U_j\to\C$, where $\{U_i\}_{i\in A}$ is an open covering of $M$, we denote by $S(\xi)$ the $S$-image (i.e. the pushforward by $S$) of $\xi$, by which we understand the line bundle with the defining cocycle $\{k_{ij}\}_{i,j\in A}$, where $k_{ij}:S^{-1}(U_i)\cap S^{-1}(U_j)\to\C$ is $k_{ij}:=\xi_{ij}\circ S$, corresponds to the covering $\{S^{-1}(U_i)\}_{i\in A}$.
 
 From now on we assume $S$ is an involution without fixed points, i.e. $S\circ S={\rm Id}$ and $S(m)\neq m$ for all $m\in M$.
 
 Crucial to our construction are $S-antisymmetric$ line bundles i.e, such that $S_\ast(\xi)=\xi^{-1}$ ( ``='' means line bundle equivalence; more precise requirements will be indicated in terms of cocycles). Since $S$-image preserves Chern numbers, while $\xi\mapsto \xi^{-1}$ reverses them, $S$-antisymmetric bundles have Chern number $0$, and so are topologically trivial. As well known, they can be represented by flat unimodular cocycles $\{\xi_{i,j}\}$, i.e. $\xi_{i,j}=\{\rm const\}$ in
$U_i\cap U_j$, $\vert \xi_{i,j}\vert=1$ (and, of course, $\xi_{ij}\xi_{jk}=\xi_{ik}$).

Our surface $X^\ast$ will be the space of an $S$-anntisymmetric line bundle $\eta$ over $M$ minus its $0$ section, where $S$ is fixed-points free but in order to define on $X^\ast$ a biholomorphic involution (determined by $S$), we need to use an $S$-antisymmetric cocycle defined next.
\begin{defin}\label{def3}
With $M$ and $S$ as above consider a complex line bundle cocycle $\{\eta_{ij}\}_{i,j\in A}$ associated with an open covering $\{V_i\}_{i\in A}$ of $M$, where $\eta_{ij}:V_i\cap V_j\to\C$. We say that $\{\eta_{ij}\}_{i,j\in A}$ is an $S$-antisymmetric cocycle if there is an involutive permutation $\sigma$ of the index set $A$, i.e. $\sigma:A\to A$, $\sigma\circ\sigma={\sf Id}_A$, such that
\begin{itemize} 
\item[a)] $S^{-1}(V_i)=V_{\sigma(i)}$, $i\in A$
\item[b)] $\eta_{\sigma(i)\sigma(j)}=1/\eta_{ij}\circ S^{-1}$ in $V_{\sigma(i)}\cap V_{\sigma(j)}=S^{-1}(V_i\cap V_j)$.
\end{itemize}
\end{defin} 

Another useful concept will be that of $S$-stable covering.
\begin{defin} A covering $\{U_j\}_{j=1,\ldots, 2n}$ of $M$ is called $S$-stable if $S(U_j)=U_{j+n}$ for $j=1,\ldots, 2n$, where the indices are taken $\mod 2n$.
\end{defin}

We notice that, given any covering $\{W_j\}_{j\in A}$ of $M$, we can produce an $S$-stable covering subordinated to that: for every $p\in M$, let $U_p$ be a neighbourhood of $p$ such that
\begin{itemize}
\item $U_p\cap S(U_p)=\emptyset$
\item $\exists h,k\in A$ such that $U_p\subseteq W_k$, $S(U_p)\subseteq W_h$.
\end{itemize}
Now, take $p_1,\ldots, p_n$ such that $\{U_{p_j}\}_j \cup \{S(U_{p_j})\}_j$ is a covering of $M$ and reindex such covering so that $S(U_j)=U_{j+n}$ for $j=1,\ldots, n$.

\begin{lemma}\label{lem5}Let $\xi$ be an $S$-antisymmetric bundle. Then  $\xi^{\otimes2}$ admits an $S$-antisymmetric unimodular representing cocycle with respect to an $S$-stable covering.\end{lemma}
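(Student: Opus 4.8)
The plan is to start from a flat unimodular cocycle $\{\xi_{ij}\}$ representing $\xi$ with respect to some open covering $\{W_i\}_{i\in A}$, which exists because $\xi$ is $S$-antisymmetric, hence topologically trivial (as noted in the text). Using the $S$-antisymmetry $S_*(\xi)=\xi^{-1}$, there is a $0$-cochain $\{c_i\}$ with $\xi_{ij}\circ S^{-1}=c_i\, \xi_{\tau(i)\tau(j)}^{-1}\, c_j^{-1}$ for a suitable matching of indices; since both cocycles are unimodular and flat we may normalise the $c_i$ to be unimodular constants as well. Next I would pass to an $S$-stable refinement $\{U_j\}_{j=1,\dots,2n}$ of $\{W_i\}$, using the construction displayed just before the statement: for each $j$ one records an index $\iota(j)\in A$ with $U_j\subseteq W_{\iota(j)}$, and the $S$-stability $S(U_j)=U_{j+n}$ gives us the involutive permutation $\sigma(j)=j+n$ on the index set $\{1,\dots,2n\}$ required by Definition \ref{def3}(a). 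Restricting $\xi_{\iota(j)\iota(k)}$ to $U_j\cap U_k$ yields a unimodular flat cocycle $\{\zeta_{jk}\}$ for $\xi$ on the $S$-stable covering, and pulling back the relation above gives $\zeta_{jk}\circ S^{-1}=d_j\,\zeta_{\sigma(j)\sigma(k)}^{-1}\,d_k^{-1}$ on $S^{-1}(U_j\cap U_k)=U_{\sigma(j)}\cap U_{\sigma(k)}$, with $\{d_j\}$ unimodular constants.

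The cocycle $\{\zeta_{jk}\}$ need not itself be $S$-antisymmetric on the nose, but its square is, after a coboundary correction, and this is where the factor $\xi^{\otimes 2}$ enters. Consider $\eta_{jk}:=\zeta_{jk}^2$; I would look for unimodular constants $b_j$ so that $\eta'_{jk}:=b_j^{-1}\eta_{jk}b_k$ satisfies $\eta'_{\sigma(j)\sigma(k)}=1/(\eta'_{jk}\circ S^{-1})$. Writing out what this demands, one needs $b_{\sigma(j)}b_{\sigma(k)}^{-1} = d_j d_k^{-1}\, b_j^{-1} b_k\,(\text{after using }\sigma^2=\mathrm{id})$, i.e. the $1$-cochain $j\mapsto b_j$ must solve a linear equation whose obstruction is the class of $\{d_j\}$ in the appropriate cohomology with unimodular coefficients; squaring has made the relevant class $2$-divisible, which is exactly what allows the correction to exist. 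Concretely, applying the defining relation twice and using $\sigma^2=\mathrm{id}$ forces $d_{\sigma(j)}(d_j\circ S^{-1})=1$, so the $d_j$ are a cocycle for the group $\mathbb Z/2\mathbb Z$ acting by $S$, and after passing to the square the corresponding obstruction vanishes because every unimodular constant has a unimodular square root; one then sets $b_j$ accordingly on a fundamental set of indices $\{1,\dots,n\}$ and propagates by the relation to $\{n+1,\dots,2n\}$.

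The resulting $\{\eta'_{jk}\}$ is unimodular (product and quotient of unimodular constants), flat, represents $\xi^{\otimes 2}$ (it differs from $\{\zeta_{jk}^2\}$ by the coboundary $\{b_j\}$), and is $S$-antisymmetric with respect to the $S$-stable covering with permutation $\sigma(j)=j+n$, which is what the lemma asserts. The main obstacle is precisely the bookkeeping of the coboundary correction: one must check that the constants $d_j$ arising from the non-canonical choices are genuinely a $\mathbb Z/2$-cocycle (so that the $2$-divisibility argument applies) and that choosing square roots consistently on the index set $\{1,\dots,n\}$ and extending by $\sigma$ does not create a contradiction on overlaps $U_j\cap U_{j+n}$ — here one uses that $U_j\cap S(U_j)=\emptyset$ in the $S$-stable covering, so no such overlap occurs and the extension is unconstrained. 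Everything else is routine manipulation of flat unimodular cocycles.
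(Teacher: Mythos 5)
Your overall route is the same as the paper's (flat unimodular cocycle, $S$-stable refinement, a comparison $0$-cochain between the cocycles of $S(\xi)$ and $\xi^{-1}$, and an obstruction constant that squaring kills), but two steps as written do not hold up. First, the assertion that the comparison cochain ``may be normalised to unimodular constants'' is not free: an equivalence between two flat unimodular cocycles is a priori only a holomorphic non-vanishing cochain. It can indeed be made locally constant, but only via a compactness argument (the defining relation forces $|d_j|=|d_k|$ on overlaps, so $\log|d_j|$ glues to a global harmonic function on the compact surface $M$, hence is constant, hence each $d_j$ is constant) --- this is exactly the point where the paper instead keeps the cochain $\{\phi_j\}$ holomorphic and uses compactness of $M$ to glue the ratios $\phi_i/(\phi_{i+n}\circ S)$ into a global non-vanishing holomorphic, hence constant, function, giving \eqref{Milano}. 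You need to supply some version of this argument; without it the constancy (and hence the unimodularity and flatness of your final cocycle) is unproved.

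Second, the ``concrete'' consistency relation you claim is forced, $d_{\sigma(j)}(d_j\circ S^{-1})=1$, is wrong: the $d_j$ are only defined up to a common unimodular factor $\lambda$, which leaves the defining relation unchanged but multiplies $d_{\sigma(j)}d_j$ by $\lambda^2$, so no such identity can be forced. What applying the relation twice actually yields is $d_j/d_{\sigma(j)}=d_k/d_{\sigma(k)}$ on overlapping indices, hence $d_j=c\,d_{\sigma(j)}$ for a single global constant $c$ with $c^2=1$ --- precisely the paper's \eqref{Milano} and the remark $c=\pm1$. Fortunately this is all your correction needs: the equation $b_{\sigma(j)}=d_j^2\,b_j^{-1}$ is consistent exactly because $d_j^2=d_{\sigma(j)}^2$, so choosing $b_j$ on $\{1,\dots,n\}$ and propagating by $\sigma$ works, and no square roots or $\mathbb Z/2$-cohomology are required. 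With these two repairs your argument becomes essentially the paper's proof, which bypasses both issues by directly setting $\nu_{ij}=\phi_i\eta_{ij}^2\phi_j^{-1}$ and verifying the condition of Definition \ref{def3}(b) by a computation in which the constant $c$ cancels.
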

\begin{proof} First of all, we note that also $\xi^{\otimes2}$ is an $S$-antisymmetric bundle.

We have that $S(\xi)=\xi^{-1}$, so there exists a covering $\{W_j\}_{j\in A}$ with respect to which the cocycles of $S(\xi)$ and $\xi^{-1}$ are equivalent.

 We can refine the covering to an $S$-stable one $\{\widetilde{W}_j\}_{j=1,\ldots, n}$, so that, if $\{\eta_{ij}\}_{i,j=1,\ldots, 2n}$ is the cocycle representing $\xi^{-1}$, then $\{\eta_{i+n, j+n}^{-1}\circ S\}_{i,j=1,\ldots, 2n}$ is the cocycle representing $S(\xi)$.
 
 We suppose that $\widetilde{W}_j$ and $\widetilde{W}_i\cap\widetilde{W}_j$ are simply connected for every $i,j$.
 
 Therefore, there exists a $0$-cochain $\{\phi_j\}$, $\phi_j\in\Ol^*(\widetilde{W}_j)$,  such that
 \begin{equation}\label{Roma}\phi_i\eta_{ij}\phi_j^{-1}=\eta_{i+n, j+n}^{-1}\circ S\end{equation}
 on $\widetilde{W}_{i}\cap\widetilde{W}_j$. Note that we also have
 $$(\phi_{i+n}\circ S)( \eta_{i+n,j+n}\circ S)( \phi_{j+n}^{-1}\circ S)=\eta_{ij}^{-1}\;.$$
 Dividing the last two equations sideways and cancelling the $\eta$-terms, we obtain
 $$\dfrac{\phi_i}{\phi_{i+n}\circ S}=\dfrac{\phi_j}{\phi_{j+n}\circ S}\qquad \textrm{ in } \widetilde{W}_i\cap\widetilde{W}_j\;.$$
 Thus, these ratios define a non-vanishing holomorphic function on $M$. Hence, there is a nonzero complex number $c\in\C^*$ such that
 \begin{equation}\label{Milano}\phi_i=c(\phi_{i+n}\circ S)\end{equation}
 for every $i=1,\ldots, 2n$.

 
\medskip

Let now $\nu_{ij}=\phi_i\eta_{ij}^2\phi_j^{-1}$; this is a cocycle representing $\xi^{\otimes 2}$. Moreover:
$$\begin{array}{rcll}\nu_{i+n, j+n}^{-1}\circ S&=&(\phi_{i+n}\circ S)^{-1}(\eta_{i+n, j+n}\circ S)^{-2}(\phi_{j+n}\circ S)&\textrm{  by definition}\\
&=&(\phi_{i+n}\circ S)^{-1}(\phi_i\eta_{ij}\phi_j^{-1})^2(\phi_{j+n}\circ S)&\textrm{  by \eqref{Roma}}\\
&=&(c^{-1}\phi_i)^{-1}\phi_i^2\eta_{ij}^2\phi_j^{-2}(c^{-1}\phi_j)&\textrm{ by \eqref{Milano}}\\
&=&\phi_i\eta_{ij}^2\phi_j^{-1}& \\
&=&\nu_{ij}&\end{array}$$
 
Therefore,  we have an $S$-antisymmetric cocycle $\{\nu_{ij}\}_{i,j=1\ldots 2n}$ with respect to the covering $\{\widetilde{W}_{j}\}_{j=1,\ldots, 2n}$.
It is easy to see that, if $\eta_{ij}$ was taken to be unimodular, so is $\xi_{ij}$.
\end{proof}

\begin{rem} We note that, if we apply twice \eqref{Milano}, we obtain that $c^2=1$, i.e. $c=\pm1$. Moreover, this number depends only on the cohomology class of $\xi$; if $c=1$, we could already produce an $S$-antisymmetric representing cocycle for $\xi$.\end{rem}

\begin{propos}\label{pro2}If $M$ is a compact Riemann surface of genus $g>1$, with a fixed-point free conformal involution $S$, then there exists an $S$-antisymmetric complex holomorphic line bundle $\eta$ which is not unipotent, i.e. $\eta^{\otimes k}$ is not holomorphically trivial for any $k\in\Z\setminus\{0\}$.
\end{propos}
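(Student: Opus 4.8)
The plan is to realize $S$-antisymmetric line bundles as a subgroup of the Picard variety and to exhibit inside it an element of infinite order. First I would note that the group $\mathrm{Pic}^0(M)$ of topologically trivial holomorphic line bundles is a complex torus of dimension $g$, and the pushforward $S_*$ acts on it as a holomorphic group automorphism $S_*\colon \mathrm{Pic}^0(M)\to\mathrm{Pic}^0(M)$ with $S_*\circ S_* = \mathrm{Id}$. The $S$-antisymmetric bundles are exactly the solutions of $S_*(\xi)=\xi^{-1}$, i.e. the kernel of the endomorphism $\Phi := S_* + \mathrm{Id}$ of $\mathrm{Pic}^0(M)$ (written additively). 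So I would set $A := \ker\Phi$, a closed complex subgroup of the torus $\mathrm{Pic}^0(M)$; it is a union of finitely many translates of a subtorus $A^0$, and ``$\eta$ not unipotent'' means precisely that $\eta$ has infinite order in the group $A$, which happens as soon as $A$ is infinite (any subtorus of positive dimension, or any finite-order-avoiding point of a positive-dimensional component, contains infinitely many, hence non-torsion, points; concretely a positive-dimensional subtorus is divisible and not finitely generated, so has elements of infinite order).

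Hence the crux is to show that $A=\ker(S_*+\mathrm{Id})$ is infinite, equivalently that its identity component $A^0$ has positive dimension. For this I would pass to the tangent space at $0$, i.e. to the action of $S$ on $H^0(M,\Omega^1_M)^* \cong H_1(M,\R)\otimes\C$ (or dually on holomorphic $1$-forms $H^0(M,\Omega^1)$, which is $g$-dimensional). The involution $S$ induces a linear involution $S^*$ on the $g$-dimensional space $H^0(M,\Omega^1)$, and $A^0$ corresponds to the $(-1)$-eigenspace of the induced map on $\mathrm{Pic}^0$, whose dimension is the dimension of the $(+1)$-eigenspace of $S^*+\mathrm{Id}$'s negative... more carefully: $\dim A^0 = \dim\ker(dS_* + \mathrm{Id})$ on the tangent space, and since $dS_*$ is an involution this kernel is its $(-1)$-eigenspace. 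So I must show $S^*$ acting on holomorphic $1$-forms has a nonzero $(-1)$-eigenspace. By Chevalley--Weil / the Lefschetz fixed point formula for the quotient $\pi\colon M\to \Sigma=M/\langle S\rangle$ (which is unramified since $S$ is fixed-point free, with $\Sigma$ of genus $g' = (g+1)/2$): the $(+1)$-eigenspace of $S^*$ on $H^0(M,\Omega^1)$ is $\pi^* H^0(\Sigma,\Omega^1)$, of dimension $g'$, so the $(-1)$-eigenspace has dimension $g - g' = g - (g+1)/2 = (g-1)/2$, which is positive precisely because $g>1$.

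The main obstacle is thus the eigenspace computation of the previous paragraph — making rigorous that $\dim A^0$ equals the dimension of the $(-1)$-eigenspace of $S^*$ on holomorphic $1$-forms, and that this eigenspace is nonzero. The first point is standard torus theory: the exponential sequence identifies the tangent space of $\mathrm{Pic}^0(M)$ with $H^1(M,\Ol_M)\cong \overline{H^0(M,\Omega^1)}$, and $S_*$ acts compatibly, so $dS_*$ is conjugate to the action of $S^*$ on $1$-forms, hence has the same eigenspace dimensions; then $A^0=\ker(dS_*+\mathrm{Id})$ exponentiates to the identity component of $\ker(S_*+\mathrm{Id})$. The second point is the Riemann--Hurwitz / Chevalley--Weil count just given, which uses only that the cover is unramified of degree $2$ and $g>1$; one could alternatively argue directly that if every holomorphic $1$-form on $M$ were $S$-invariant then every holomorphic $1$-form would descend to $\Sigma$, forcing $g \le g' = (g+1)/2$, i.e. $g\le 1$, a contradiction. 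Finally, having produced $\eta\in A$ of infinite order, I would remark that by Lemma \ref{lem5} (and the subsequent remark) one may, after replacing $\eta$ by $\eta^{\otimes 2}$ if necessary, represent it — or an equivalent non-unipotent bundle — by an $S$-antisymmetric unimodular cocycle on an $S$-stable covering, so that the bundle is of exactly the form needed for the construction of $X^\ast$ in Theorem \ref{teo1}.
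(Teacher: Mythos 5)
Your proposal is correct, and its skeleton is the same as the paper's: both identify the $S$-antisymmetric bundles with the kernel $\mathcal{K}$ of the holomorphic homomorphism $\Psi(\xi)=\xi\,S(\xi)$ of the Picard torus $\mathcal{P}(M)$, show $\mathcal{K}$ is larger than countable, and then extract a non-torsion (hence non-unipotent) element using that unipotent bundles form a countable set. Where you genuinely diverge is in the key step that $\mathcal{K}$ is big: the paper argues at the group level, noting that $\Psi(\mathcal{P}(M))$ lies in the subgroup of $S$-invariant bundles, which it identifies with pullbacks from $\Sigma=M/\{I,S\}$, a torus of dimension $(g+1)/2<g$, so the kernel cannot be countable; you argue infinitesimally, computing the Lie algebra of $\mathcal{K}$ as the $(-1)$-eigenspace of the induced involution on $T_0\mathcal{P}(M)\cong H^1(M,\Ol)\cong\overline{H^0(M,\Omega^1_M)}$ and getting dimension $g-g'=(g-1)/2>0$ from the descent of invariant $1$-forms to the unramified quotient (Chevalley--Weil/Riemann--Hurwitz). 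The two counts are dual (the paper bounds the $(+1)$-part, you compute the $(-1)$-part, both via $g'=(g+1)/2$); your version has the small advantages of yielding the exact dimension of the identity component of $\mathcal{K}$ and of needing only that invariant holomorphic forms descend, rather than the paper's stronger (and unargued) assertion that every $S$-invariant line bundle is a pullback from $\Sigma$, while the paper's version avoids the tangent-space bookkeeping (identification of $d\Psi$ with $S^*+\mathrm{Id}$ and $\dim\ker\Psi=\dim\ker d\Psi$), which you correctly supply. Your closing remark invoking Lemma \ref{lem5} is not part of the proposition and, as in the paper, belongs to the later proof of Theorem \ref{teo1}.
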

\begin{proof}
The proof uses some basic facts about Picard variety, $\mathcal P(M)$, of the Riemann surface, for which we refer to Gunning \cite[\S 8]{GU}. $\mathcal P(M)$ has several descriptions; in the one crucial for us it is the group  of (holomorphic equivalence classes of) complex line bundles of Chern class $0$ (i.e. topologically trivial) on $M$. With the operation of tensor product $\mathcal P(M)$ becomes a group. For a positive genus $g$ of $M$, $\mathcal P(M)$ can be given a complex structure with which it becomes a complex Lie group, specifically a complex torus of complex dimension $g$, \cite[\S 8, p.146]{GU}. 
%

Consider the map $\Psi:\mathcal{P}(M)\to\mathcal{P}(M)$ defined by $\Psi(\xi):=\xi S(\xi)$, $\xi\in\mathcal{P}(M)$. It is clear that $\Psi$ is a homomorphism of complex Lie groups. The image of $\Psi$ is contained in the subgroup of line bundles $\eta$ such that $\eta=S(\eta)$; these line bundles are pullbacks of line bundles on $M/\{I,S\}$, so their subgroup is a complex torus of dimension $(g+1)/2$.

Consider $\mathcal{K}=\mathrm{ker}\ \Psi$; this is a Lie subgroup of $\mathcal{P}(M)$, and it cannot be countable (or finite) as $\mathcal{P}(M)/\mathcal{K}=\Psi(\mathcal{P}(M))$ is contained in a complex torus of dimension $(g+1)/2<g$, as $g>1$.
%
%

Recall now that, as a real Lie group, $\mathcal P(M)$ is isomorphic to the real torus $\big(\R/\Z\big)^{2g}$. The unipotent line bundles correspond to the points of real torus with all coordinates rational, and so form a countable set.
Since $\mathcal K$ cannot be countable, it must contain a non-unipotent bundle $\eta$ which proves the proposition.
\end{proof}

\begin{rem} The assumption $g>1$ is necessary. If $M$ is a torus, $\mathcal{K}$ is a discrete subgroup of $\mathcal{P}(M)$, which is compact, so $\mathcal{K}$ is finite and all its elements have finite order, i.e. are unipotent line bundles. Moreover, given the rather easy structure of the automorphisms of a torus and the isomorphism between a torus and its Jacobian, one can explicity compute the $S$-antisymmetric line bundles for any given involution $S$. \end{rem}

\subsection{Proof of Theorem \ref{teo1}}
Following Lemma \ref{lemma_odd}, let $M$ be a compact Riemann surface of odd genus $g>1$, with a conformal involution $S:M\to M$ without fixed points.

By Proposition \ref{pro2}, we have a non-unipotent $S$-antisymmetric line bundle $\eta$ on $M$, whose square $\nu=\xi^{\otimes2}$, by Lemma \ref{lem5}, can be represented with a unimodular $S$-antisymmetric cocycle $\{\nu_{ij}\}_{i,j=1,\ldots, 2n}$, with respect to an $S$-stable covering $\{U_j\}_{j=1,\ldots, 2n}$.

Let $X^\ast$ be the total space of $\nu$ minus the zero section $M$. Using local trivializations on the $S$-stable covering $\{U_j\}_{j=1,\ldots, 2n}$, we will construct a holomorphic involution $T:X^\ast\to X^\ast$ and a proper pluriharmonic function $\chi^\ast:X^\ast\to \R$.
%

Let $p:X^\ast\to M$ be the restriction to $X^\ast$ of the projection map of the bundle $\nu$; $p^{-1}(U_j)\cong U_j\times \C^*$, so that $X^\ast$ can be given as the union of charts $\{U_j\times \C^*\}_{j=1,\ldots, 2n}$, glued together with the following equivalence relation: if $U_i\cap U_j\neq\emptyset$, $(p,w_1)\in U_i\times\C^*,\ (q,w_2)\in U_j\times \C^*$ then
$$(p,w_1)\sim(q,w_2)\  \Leftrightarrow\ p\equiv q\textrm{ and }w_1=\nu_{ij}w_2$$

Define on each chart a pluriharmonic function $\chi_i:U_i\times\C^\ast\to\R$ by $\chi_i(p,w)=\log\vert w\vert$. Since $\vert \nu_{ij}\vert=1$ in $U_i\cap U_j$, ${\chi_i}_{\vert_{U_i\cap U_j}}={\chi_j}_{\vert_{U_i\cap U_j}}$ and so they define a single-valued pluriharmonic function $\chi^\ast:X^\ast\to\R$; it is easy to verify that $\chi^\ast$ is proper.

To define a biholomorphic involution $T$ of $X^\ast$, define first maps of charts 
$$
T_i:U_i\times\C^\ast\to U_{i+n}\times\C^\ast
$$ 
by
$$
T_i(p,w)=(S(p),1/w).
$$
Since $S(U_i)=U_{i+n}$ it is clear that $T_i$ is well defined (here and in what follows, the indexes are to be understood mod $2n$), and that 
$$
T_i(U_i\times\C^\ast)=U_{i+n}\times\C^*;
$$
$$
T_{i+n}\circ T_i={\rm Id}_{U_i}\qquad i=1,\ldots, 2n.
$$
It remains to check that whenever charts $U_i\times\C^\ast$ and $U_j\times\C^\ast$ overlap, $T_i$ and $T_j$ are equal on $\big(U_i\cap U_j\big)\times\C^\ast$.

Let $(p,w_1)\in U_i\times\C^\ast$, $(q,w_2)\in U_j\times\C^\ast$ and $(p,w_1)\sim (q,w_2)$ i.e. $p\equiv q$ and $w_1=\nu_{ij}(p)w_2$. Then also  $(S(p), 1/w_1)\sim (S(q), 1/w_2)$; indeed, $S(p)\equiv S(q)$, because $p\equiv q$, and $\nu_{ij}^{-1}=\nu_{i+n, j+n}\circ S$, so
$$\frac{1}{w_1}=\frac{1}{w_2}\nu_{ij}^{-1}(p)=\frac{1}{w_2}\nu_{i+n, j+n}(S(p))\;.$$
Therefore $T_i(p,w_1)\sim T_j(q,w_2)$. It follows that the family $\{T_i\}$ defines one biholomorphic involution $T:X^\ast\to X^\ast$.

It is clear from the definition that $T$ does not have fixed points if $S$ does not. 

We verify now that $\chi^\ast\circ T=-\chi^\ast$.

If $(p,w)\in U_i\times\C^\ast$, $T(p,w)=(S(p),1/w)$ and
\begin{eqnarray*}
\chi^\ast\circ T(p,w)&=&\chi^\ast(S(p),1/w)=\\
&&-\log\vert w\vert=-\chi^\ast(p,w).
\end{eqnarray*}

We define $\alpha^\ast(x)=|\chi^\ast(x)|^2$. This function is clearly plurisubharmonic and exhaustive, being proper and bounded from below.

As we know from \cite[Example 2.2]{mst}, if $\nu$ is not unipotent, $X^\ast$ is a Grauert type surface (in particular each complex leaf is dense in the level set of $\chi^\ast$ which contain it).

So far, we have proved points (1) and (2). The group $\langle T\rangle=\{I, T\}$ acting freely and properly discontinuously on $X^\ast$, we define the quotient 
$$\pi:X^\ast \to X=X^\ast/\langle T\rangle$$ 
which is a holomorphic covering map between complex surfaces. The function $\alpha^\ast$ descends as a real analytic plurisubharmonic exhaustion function $\alpha$ on $X$; the levels of $\alpha$ are still Levi flat hypersurfaces with dense leaves, hence $X$ is a Grauert type surface, showing (3).

Now, consider $\chi:X\to\R$ a pluriharmonic function; we can lift it to $\chi\circ\pi:X^\ast\to\R$, again pluriharmonic, so that $\chi\circ\pi= a\chi^\ast+b$ for some constants $a,b\in\R$ by \cite[Lemma 5.4 (iii)]{mst}. Since $\pi\circ T=\pi$, we have that
$$a\chi^\ast + b=\chi\circ \pi=\chi\circ\pi\circ T=a\chi^\ast\circ T + b=-a\chi^\ast +b$$
so $a=0$ and $\chi$ is constant. This proves (4). \hfill $\Box$

\begin{bibdiv}
\begin{biblist}
\bib{chi}{book}{
   author={Chirka, E. M.},
   title={Complex analytic sets},
   series={Mathematics and its Applications (Soviet Series)},
   volume={46},
   note={Translated from the Russian by R. A. M. Hoksbergen},
   publisher={Kluwer Academic Publishers Group, Dordrecht},
   date={1989},
   pages={xx+372},
   isbn={0-7923-0234-6},
   doi={10.1007/978-94-009-2366-9},
}

\bib{Gra}{article}{
   author={Grauert, Hans},
   title={\"Uber Modifikationen und exzeptionelle analytische Mengen},
   language={German},
   journal={Math. Ann.},
   volume={146},
   date={1962},
   pages={331--368},
   issn={0025-5831},
}
   \bib{GU}{book}{
   author={Gunning, R. C.},
   title={Lectures on Riemann surfaces},
   series={Princeton Mathematical Notes},
   publisher={Princeton University Press, Princeton, N.J.},
   date={1966},
   pages={iv+254},
}

\bib{Lo}{article}{
   author={Lojasiewicz, S.},
   title={Triangulation of semi-analytic sets},
   journal={Ann. Scuola Norm. Sup. Pisa (3)},
   volume={18},
   date={1964},
   pages={449--474}
}

%
%

 \bib{crass}{article}{
   author={Mongodi, Samuele},
   author={Slodkowski, Zbigniew},
   author={Tomassini, Giuseppe}
   title={On weakly complete surfaces},
   journal={C. R. Math. Acad. Sci. Paris},
   date={2015},
   doi={10.1016/j.crma.2015.08.009},
   }

   \bib{mst}{article}{
   author={Mongodi, S.},
   author={Slodkowski, Z.},
   author={Tomassini, G.},
   title={Weakly complete surfaces},
   journal={Indiana U. Math. J.},
   date={2016},
   note={to appear}
   }

 \bib{Sl2}{article}{
 author={Slodkowski, Zbigniew},
   title={Local maximum property and $q$-plurisubharmonic functions in
   uniform algebras},
   journal={J. Math. Anal. Appl.},
   volume={115},
   date={1986},
   number={1},
   pages={105--130},
   issn={0022-247X},
   doi={10.1016/0022-247X(86)90027-2},
}
%
%

\bib{two}{article}{
   author={Tworzewski, P.},
   author={Winiarski, T.},
   title={Continuity of intersection of analytic sets},
   journal={Ann. Polon. Math.},
   volume={42},
   date={1983},
   pages={387--393},
   issn={0066-2216},
}	

%
\bib{voi}{book}{
   author={Voisin, Claire},
   title={Hodge theory and complex algebraic geometry. I},
   series={Cambridge Studies in Advanced Mathematics},
   volume={76},
   edition={Reprint of the 2002 English edition},
   note={Translated from the French by Leila Schneps},
   publisher={Cambridge University Press, Cambridge},
   date={2007},
   pages={x+322},
   isbn={978-0-521-71801-1},
}
  \end{biblist}
\end{bibdiv}

\end{document}